\documentclass[11pt]{article}

\usepackage[latin1]{inputenc}

\usepackage{amsmath,amssymb,amsthm,graphicx}
\usepackage{amsfonts}
\usepackage{subfig}
\usepackage{color}
\usepackage{authblk} 

\newtheorem{theorem}{Theorem}[section]
\newtheorem{proposition}[theorem]{Proposition}

\newtheorem{lemma}[theorem]{Lemma}
\newtheorem{conjecture}[theorem]{Conjecture}
\newtheorem*{theoremx}{Theorem}
\newtheorem*{propositionx}{Proposition}

\theoremstyle{remark}
\newtheorem{remark}[theorem]{Remark}
\theoremstyle{definition}
\newtheorem{definition}[theorem]{Definition}
\newtheorem{example}[theorem]{Example}

\numberwithin{equation}{section}
\numberwithin{figure}{section}

\newcommand{\R}{\mathbb{R}}
\newcommand{\Z}{\mathbb{Z}}
\newcommand{\N}{\mathbb{N}}

\newcommand{\A}{\mathcal{A}}

\newcommand{\FF}{\mathcal{F}}
\newcommand{\LL}{\mathcal{L}}

\newcommand{\F}{\mathbb{F}}

\renewcommand{\emptyset}{{\varnothing}}
\DeclareMathOperator{\conv}{conv}

\DeclareMathOperator{\sd}{sd}
\DeclareMathOperator{\rk}{rk}
\renewcommand{\bar}{\overline}

\newcommand{\bil}[1]{\ensuremath{(#1)}}

\newcommand{\heading}[1]{\medskip\par\noindent{\bf #1}}





\title{Non-embeddability of geometric lattices and buildings}

\author[1,2,a]{Martin Tancer}
\author[2,b]{Kathrin Vorwerk}

\affil[1]{\small Department
of Applied Mathematics,
Charles University in Prague, Malostransk\'{e} n\'{a}m.
25, 118~00~~Praha~1.}
\affil[2]{Institutionen f\"{o}r matematik, Kungliga Tekniska H\"{o}gskolan, 100~44 Stockholm}
\affil[a]{\small Partially supported by the ERC Advanced Grant No.
267165 and by the Center of Excellence--Inst.\ for Theor.\ Comp.\ Sci., Prague
(project P202/12/G061 of~GA~\v{C}R).}

\affil[b]{\small Supported by grant KAW 2005.0098 from the Knut and Alice Wallenberg Foundation.}


\date{}
\begin{document}

\maketitle


\begin{abstract}
A fundamental question for simplicial complexes is to find the lowest dimensional Euclidean space in which they can be embedded.
We investigate this question for order complexes of posets. We show that order complexes of thick geometric lattices as well as several classes of finite buildings, all of which are order complexes, are hard to embed.
That means that such $d$-dimensional complexes require
$(2d + 1)$-dimensional Euclidean space for an embedding.
(This dimension is in general always sufficient for any $d$-complex.)

We develop a method to show non-embeddability for general order complexes of posets which builds on properties of the van Kampen obstruction.
\end{abstract}

\section{Introduction}

A classical question for \emph{simplicial complexes}
is to find the smallest integer
$m$ such that the complex $K$ \emph{embeds} into $\R^m$. It has been studied
since the 1930's. 

It is not hard to see that any $d$-dimensional simplicial complex can be
embedded even linearly into $\R^{2d+1}$ by putting its vertices on the moment
curve. On the other hand, there are $d$-dimensional simplicial complexes which
do not embed into $\R^{2d}$. Basic examples are known as the van
Kampen-Flores~complexes~\cite{vankampen32,flores32}.
They are the $d$-dimensional skeleton of a $(2d+2)$-dimensional simplex and the $(d+1)$-fold join of a three-point discrete set.

We investigate the question of embeddability for
order complexes of posets.
We develop a general method with which one can show that
certain order complexes of
posets do not embed into Euclidean space of low dimension.
%
We first apply this method to order complexes of finite subspace lattices.
Then, we generalize in two directions: to order complexes of thick geometric lattices and to some classes of finite buildings, all of them being order complexes of posets.
Here, we overview our main results, although some definitions are given in
later sections.

\begin{theoremx}[Theorem \ref{t:geometric}]
If $L$ is a finite thick geometric lattice of rank $d+2$
then the ($d$-dimensional) order complex $\Delta(L)$ does not embed into $\R^{2d}$.
\end{theoremx}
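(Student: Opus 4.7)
The plan is to reduce the statement to the general non-embeddability criterion for order complexes of posets developed earlier in the paper (the van Kampen obstruction-based method mentioned in the introduction). That criterion will say, roughly, that if along some maximal chain of the proper part $\bar L = L \setminus \{\hat 0, \hat 1\}$ one can find ``enough branching'' at every rank (in the spirit of the van Kampen--Flores complex $[3]^{*(d+1)}$), then the order complex does not embed in $\R^{2d}$. So the task is to extract the required combinatorial input from the hypotheses ``geometric'' and ``thick.''

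First, I fix a maximal chain $C\colon x_1 < x_2 < \dots < x_{d+1}$ in $\bar L$; such a chain exists because $L$ has rank $d+2$ and, being geometric, is graded. Next, at each level $i \in \{1,\dots,d+1\}$, I consider the rank-$2$ interval $[x_{i-1}, x_{i+1}]$ (with $x_0 = \hat 0$, $x_{d+2}=\hat 1$). Since $L$ is geometric (hence atomic and semimodular), every such interval is itself a rank-$2$ geometric lattice, i.e.\ a ``line''; the thickness assumption is precisely that each such line carries at least three atoms. Thus I obtain at each level $i$ three pairwise incomparable elements $x_i^{(1)}, x_i^{(2)}, x_i^{(3)}$ of rank $i$, all strictly between $x_{i-1}$ and $x_{i+1}$, and in particular any one of them can replace $x_i$ in $C$ while still producing a maximal chain of $\bar L$.

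These data feed directly into the paper's criterion: the triples $\{x_i^{(1)}, x_i^{(2)}, x_i^{(3)}\}$ play the role of the three-point fibres in a van Kampen--Flores style obstruction argument applied to $\Delta(L)$. Concretely, one sets up an equivariant cochain on the deleted join $\Delta(L)^{*2}_\Delta$ supported on chains of this restricted form and shows, via the general lemma, that it represents a non-trivial class in $H^{2d}_{\Z_2}(\Delta(L)^{*2}_\Delta; \Z)$, which is the van Kampen obstruction to embedding in $\R^{2d}$.

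The main obstacle I expect is that not every combination of the three choices at different levels yields an actual chain in $L$ (choosing $x_i^{(j)}$ and $x_{i+1}^{(k)}$ at adjacent levels need not produce a comparable pair), so the desired substructure is not literally a copy of $[3]^{*(d+1)}$. The paper's criterion should be designed precisely to tolerate this: it requires only level-wise thickness around a single maximal chain, not a full join substructure, and the van Kampen obstruction is evaluated using only those simplices that do occur. Assuming the criterion is set up in this form (which is the natural level of generality for buildings as well), the geometric--thick hypotheses supply exactly what is needed, and the theorem follows.
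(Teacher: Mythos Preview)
Your proposal has a genuine gap: the criterion you are hoping for is not the one the paper proves, and the obstacle you flag is exactly the one that blocks your approach.

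The paper's general tool (Theorem~\ref{thm:winjmap}, specialised in Proposition~\ref{p:Xnonemb}) does \emph{not} ask for three elements at each rank along a fixed maximal chain. It asks for a \emph{weakly independent atom configuration}: a collection $\{x_{i,j}: i\in[d+1],\,j\in[3]\}$ of \emph{atoms} such that the joins $\bigvee_{i\in I} x_{i,j_i}$ are pairwise distinct whenever the index sets are ``disjoint'' in the sense of Lemma~\ref{l:char_wimap}. The map $g\colon V(D_3^{*(d+1)})\to P$ sends vertices to atoms, and the extension to faces is by \emph{join}; this is what makes $\tilde g\colon \sd K\to\Delta(\bar P)$ simplicial. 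Your elements $x_i^{(j)}$ sit at rank $i$, not rank $1$, and you have no join-injectivity statement for them; the non-comparability of $x_i^{(j)}$ and $x_{i+1}^{(k)}$ you worry about is precisely why no simplicial map $D_3^{*(d+1)}\to\Delta(\bar L)$ arises from your data. The paper's criterion is not ``designed to tolerate'' this; it avoids the issue entirely by working with atoms and joins.

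The paper's actual proof of Theorem~\ref{t:geometric} proceeds quite differently, by induction on the rank and a modular/non-modular dichotomy. If $L$ is modular, the structure theorem (Lemma~\ref{lem:product}) together with thickness forces $L$ to be a product of projective subspace lattices, and one assembles a weakly independent atom configuration from the configurations of Propositions~\ref{p:conftypeA} and~\ref{p:conftypeA2} in each factor (Proposition~\ref{p:modular}). If $L$ is non-modular, Lemma~\ref{lem:nonmod} produces a hyperplane $h$ and a line $\ell$ with $h\wedge\ell=\hat 0$; one applies the induction hypothesis to the thick geometric lattice $[\hat 0,h]$ to get $3d$ atoms, and adjoins three atoms below $\ell$ (supplied by thickness) to complete a weakly independent configuration in $L$ (Proposition~\ref{p:nonmodular}). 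The delicate point is verifying weak independence of the enlarged configuration, which uses semimodularity and $h\wedge\ell=\hat 0$ in an essential way. None of this is visible from ``level-wise thickness around a single chain.''
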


Finite buildings are very symmetric discrete structures with high complexity. A reader familiar with buildings should expect them to be hard to embed into low-dimensional Euclidean space.

\begin{theoremx}[Theorem \ref{t:buildings}]
A $d$-dimensional thick finite
building $\Delta$ does not embed into $\R^{2d}$ if any of the following
conditions is satisfied
\begin{enumerate}
\item $d = 1$,
\item $\Delta$ is of type $A$,
\item $\Delta$ is of type $B$ coming from an
alternating bilinear form on $\F_q^{2d+2}$, or
\item $\Delta$ is of type $B$ coming from an
Hermitian form on $\F^{2d + 2}_{q^2}$ or $\F^{2d + 3}_{q^2}$.
\end{enumerate}
\end{theoremx}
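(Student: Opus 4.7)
The plan is to dispatch the four cases separately, reducing two of them to Theorem~\ref{t:geometric} or to a direct planarity argument, and applying the general van Kampen-type machinery developed earlier in the paper to the two type $B$ cases.

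Case (ii) is immediate. A $d$-dimensional thick finite building of type $A$ is precisely the order complex of the proper nontrivial subspaces of $\F_q^{d+2}$ for some prime power $q$. Since this poset is the canonical example of a thick finite geometric lattice of rank $d+2$, Theorem~\ref{t:geometric} applies verbatim and forbids an embedding into $\R^{2d}$.

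For case (i), a one-dimensional thick finite building is the incidence graph of a thick generalized $n$-gon for some $n\geq 2$: a bipartite graph in which every vertex has degree at least three, of girth $2n$ and of diameter $n$. I would exhibit a $K_{3,3}$-subdivision by fixing an incident point-line pair $(p,\ell)$, using thickness to choose two further points on $\ell$ and two further lines through $p$, and then using the metric properties of a generalized polygon to complete the required six interior-disjoint paths between these six new vertices. Kuratowski's theorem then rules out an embedding into $\R^2$.

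For cases (iii) and (iv), the poset of proper nontrivial totally isotropic subspaces is in general \emph{not} a lattice, so Theorem~\ref{t:geometric} cannot be invoked. Instead, one plugs this poset directly into the general framework for order complexes. The task is to verify the combinatorial hypotheses that force the van Kampen obstruction to be non-zero in codimension $2d$. Roughly, these hypotheses demand, for every deep enough chain of isotropic subspaces, sufficiently many pairwise "disjoint" extensions to maximal isotropic flags; such extensions are available by the classical enumeration of isotropic subspaces for an alternating form on $\F_q^{2d+2}$ and for a Hermitian form on $\F_{q^2}^{2d+2}$ or $\F_{q^2}^{2d+3}$, which is precisely where the arithmetic restrictions on the ambient dimension and on the scalar field enter. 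The main obstacle is this last verification: in the type $A$ situation the lattice join supplies extensions of any partial flag for free, whereas in the type $B$ setting the isotropy constraint cuts down the available extensions, and one has to check by hand that enough disjoint pairs of maximal isotropic flags pass through each codimension-two face to keep the van Kampen obstruction alive. Once this combinatorial check is in place, the general non-embeddability method of the previous section closes the argument.
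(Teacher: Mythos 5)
Your reduction of case~(ii) to Theorem~\ref{t:geometric} is correct (the paper reduces to Theorem~\ref{t:typeA} directly, but the subspace lattice of $\F_q^{d+2}$ is a thick geometric lattice, so your route is equally valid). The other cases diverge from the paper, and cases~(iii) and (iv) contain a genuine gap.

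For case~(i), the paper does not hunt for a $K_{3,3}$-subdivision. Instead it splits on $m$: for $m\geq 3$ a thick generalized $m$-gon has girth at least $6$ and minimum degree at least $3$, but Euler's formula forces every planar graph of girth at least $6$ to have a vertex of degree at most $2$, a contradiction; for $m=2$ the building is $K_{p,q}$ with $p,q\geq 3$, which contains $K_{3,3}$. Your sketch fixes a flag $(p,\ell)$, picks two more points on $\ell$ and two more lines through $p$, and then claims the ``metric properties'' supply ``six interior-disjoint paths.'' As written this does not pin down a bipartition with three vertices on each side (you describe five or six vertices in no clear bipartite arrangement), $K_{3,3}$ needs nine paths rather than six, and you do not address $m=2$, where the diameter-$2$ structure is qualitatively different. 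The idea of locating a $K_{3,3}$-subdivision is plausible for $m\geq 3$ but would need to be carried out carefully; the Euler argument is the cleaner route.

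For cases~(iii) and (iv) the gap is more serious. The machinery the paper uses is not about ``sufficiently many pairwise disjoint extensions of flags'' or the enumeration of isotropic subspaces; it is about exhibiting $3(d+1)$ \emph{atoms} (isotropic lines) that form a weakly independent atom configuration in the sense of Definition~\ref{def:windconf}, which via Proposition~\ref{p:Xnonemb} immediately yields non-embeddability. The paper constructs this configuration explicitly from a hyperbolic basis $e_1,\ldots,e_{d+1},f_1,\ldots,f_{d+1}$: for the alternating case one takes $x_{i,1}=e_i$, $x_{i,2}=e_i+f_i$, $x_{i,3}=f_i$; for the Hermitian case $x_{i,2}=e_i+\lambda f_i$ with $\bar\lambda=-\lambda$ (such a $\lambda$ exists in $\F_{q^2}$). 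The key observations are that any selection of one vector per index $i$ spans a totally isotropic subspace (cross terms $\bil{x_{i,j},x_{i',j'}}$ vanish for $i\neq i'$, and $\bil{x_{i,j},x_{i,j}}=0$), and that independence holds because $\bil{x_{i,j},x_{i,j_i}}=0$ forces $j=j_i$. You state that ``the main obstacle is this last verification'' — i.e., the central content of parts~(iii) and (iv) — and leave it unfilled. Without the explicit hyperbolic-basis configuration (or some substitute of equal precision), the argument does not close.
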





Our proof method builds on properties of the van Kampen obstruction.
This is an effectively algorithmically computable cohomological obstruction $\vartheta(\Delta)$ which can be used as certificate for non-embeddability:
If $\vartheta(\Delta) \neq 0$ for some $d$-dimensional simplicial complex $\Delta$, then $\Delta$ does not
embed into $\R^{2d}$
(for $d \neq 2$, the converse is also true). Therefore it would be sufficient to check that
$\vartheta(\Delta) \neq 0$ in order to prove non-embeddability
into $\R^{2d}$.

Unfortunately, it is not a priori obvious how to compute this obstruction for the infinite class of complexes which we consider.
So instead of computing $\vartheta(\Delta)$, we prove and use the following property
where $|K|$ stands for the geometric realization of $K$.


\begin{propositionx}[Proposition \ref{p:map}]
Let $K$ be a $d$-dimensional simplicial complex with
$\vartheta(K) \neq 0$.
Let $L$ be a simplicial complex and $f\colon |K| \rightarrow |L|$ 
be a map satisfying the following condition:
\begin{equation*}
\text{For every two disjoint } \sigma, \tau \in K \text{ we have } f(|\sigma|) \cap f(|\tau|) = \emptyset.
\end{equation*}
Then $L$ does not embed into
$\R^{2d}$.
\end{propositionx}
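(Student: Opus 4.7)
The plan is a short reduction to the standard characterization of the van Kampen obstruction. Recall that $\vartheta(K)$ is the obstruction to the existence of an \emph{almost embedding} of $|K|$ into $\R^{2d}$, that is, of a continuous map $h\colon |K| \to \R^{2d}$ satisfying $h(|\sigma|) \cap h(|\tau|) = \emptyset$ for every pair of disjoint simplices $\sigma, \tau \in K$. The relevant property, which I would quote from the standard obstruction theory, is the implication: if such an almost embedding exists, then $\vartheta(K) = 0$. This direction holds for every $d$, and is the only one needed here (the converse, which fails for $d = 2$, is not used).

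Suppose for contradiction that $L$ embeds into $\R^{2d}$, i.e.\ that there is an injective continuous map $g\colon |L| \hookrightarrow \R^{2d}$. The natural candidate for an almost embedding of $K$ is then the composition
\[
h := g \circ f \colon |K| \to \R^{2d}.
\]
The single step to verify is that $h$ is indeed an almost embedding. Let $\sigma, \tau \in K$ be disjoint. By the hypothesis on $f$, we have $f(|\sigma|) \cap f(|\tau|) = \emptyset$ in $|L|$; since $g$ is injective, this disjointness is preserved by $g$, so $h(|\sigma|) \cap h(|\tau|) = g(f(|\sigma|)) \cap g(f(|\tau|)) = \emptyset$. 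Hence $h$ is an almost embedding of $K$ into $\R^{2d}$, and the property of $\vartheta$ recalled above forces $\vartheta(K) = 0$, contradicting the assumption. Therefore no such embedding $g$ exists.

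Once the correct formulation of $\vartheta$ is in hand, the argument itself is a one-line composition. The main ``obstacle'' is therefore conceptual rather than technical: one must invoke the version of the van Kampen obstruction that detects almost embeddings (not only honest embeddings), and verify that composition with an injective map preserves the set-theoretic disjointness condition. This is precisely what makes the proposition useful downstream, since it reduces showing non-embeddability of a complicated complex $L$ to producing a map from a single ``bad'' complex $K$ which merely keeps images of disjoint simplices apart, a much weaker requirement than building almost embeddings of $L$ directly.
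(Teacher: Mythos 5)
Your proof is correct and follows essentially the same route as the paper: compose $f$ with a hypothetical embedding $g$ of $L$, note that injectivity of $g$ preserves the disjointness condition, and then invoke the fact (the paper's Proposition~\ref{t:prop}, derived from the Gauss map on the simplicial deleted product and Theorem~\ref{th:vk1}(ii)) that an almost embedding of $K$ into $\R^{2d}$ forces $\vartheta(K)=0$.
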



When applying Proposition \ref{p:map} to order complexes of thick posets, we choose $K$ as the van Kampen-Flores complex $D^{*(d+1)}_3$ for which already van Kampen computed that $\vartheta(D^{*(d+1)}_3)$ does not vanish.

We introduce the notion of {\em weakly independent atom configurations} which constitute our main tool for proving Theorem \ref{t:geometric} and Theorem \ref{t:buildings}.
We show that if a poset $P$ contains a weakly independent atom configuration of $3(d+1)$ atoms, then its order complex $\Delta(P)$ cannot be embedded into $\R^{2d}$.
%
%





The article is organized as follows:
In Section \ref{s:prel}, we recall necessary definitions about simplicial complexes, posets and order complexes, and we give a short introduction to geometric lattices.

Embeddability and the van Kampen obstruction are discussed in Section \ref{s:embed}, including a proof of Proposition \ref{p:map}.

Section \ref{s:ordcom} is devoted to our main tools for showing non-embeddability of order complexes. First, we develop a method to apply Proposition \ref{p:map} for general complexes $K$. Then, we restrict ourselves to $K = D^{*(d+1)}_3$ and weakly independent atom configurations in Section \ref{s:vecconf}, and we prove the non-embeddability result for order complex of posets containing a weakly independent atom configuration.

In the remaining sections, we apply our methods to some classes of order complexes and show the main theorems. In Section \ref{s:projspace}, we prove Theorem \ref{t:typeA} about order complexes of subspace lattices of projective spaces. Theorem \ref{t:geometric} for order complexes of thick geometric lattices is proved in Section \ref{s:geolatt}.
Finally, we prove Theorem \ref{t:buildings} for buildings in Section \ref{s:build} including a short introduction to finite buildings in Section \ref{s:buildings}.

\section{Preliminaries}
\label{s:prel}

\heading{Miscellaneous.}
By $[n]$ we denote the set $\{1, \dots, n\}$.
Given vectors $v_1, \dots, v_k$
the symbol $\langle v_1, \dots, v_k\rangle$ denotes their span.


\heading{Simplicial complexes.}
We assume knowledge of the basic definitions for abstract simplicial complexes which can be found in Chapter $1$ of \cite{Matousek2003}.


For two simplicial complexes $K$ and $L$ with disjoint vertex sets $V(K)$ and $V(L)$,
their \emph{join} $K * L$ is the simplicial complex which has
faces $\alpha \cup \beta$ for all $\alpha \in K$ and $\beta \in L$.
If $V(K)$ and $V(L)$ are not disjoint, we consider artificial copies of $K$ and $L$ on disjoint vertex sets when forming the join.
Thus, it makes sense to speak of an $n$-fold join of a single
complex $K$ which then is the join of $n$ different copies of $K$.

Note that we use the term {\em join} also in a different context when dealing with posets. The terminology is well-established in both cases and the context should always make it clear which meaning of join we refer to.

\heading{Geometric realizations.}
Assume that $K$ is a simplicial complex and that $f \colon V(K) \rightarrow \R^d$ is a map such that
$$
    \conv \{ f(a) : a \in \alpha \} \cap \conv \{ f(b) : b \in \beta \}
    = 
    \conv \{ f(a) : a \in \alpha \cap \beta \}
$$
for all $\alpha, \beta \in K$. 
Geometrically, this condition means that when extending $f$ affinely to faces
of $K$, 
then the images of two faces $\alpha, \beta \in K$ intersect in the
image of $\alpha \cap \beta \in K$. In particular, the images of 
two disjoint faces never intersect.


In that case, we set
$
|\alpha| := \conv \{f(a) : a \in \alpha\}
$
and
$$
|K| := \bigcup\limits_{\alpha \in K} |\alpha|.
$$
We call $|\alpha|$ a geometric realization of the face $\alpha$ and $|K|$ a geometric realization of $K$.
It is a well-known fact that any two geometric realizations of a complex $K$ are homeomorphic so that the notation $|K|$ is non-ambiguous and we can say that $|K|$ is \emph{the geometric realization of $K$}.


A map $g\colon V(K) \rightarrow V(L)$ between the sets of
vertices of two simplicial complexes is \emph{simplicial} if $g(\alpha) \in L$
for every $\alpha \in K$.
A simplicial map $g \colon V(K) \rightarrow V(L)$ also has a geometric realization
$|g| \colon |K| \rightarrow |L|$. Let $f_K$ and $f_L$ be maps from the
geometric realizations of $K$ and $L$. Then we set $|g|(f_K(v)) := f_L(g(v))$ for
$v \in V(K)$ and extend this map affinely on every simplex (see~\cite{Matousek2003} for more details).

\heading{Barycentric subdivisions.}
To a simplicial complex, we can associate another complex as follows:
The \emph{face poset} $\FF(K)$ of $K$ is the poset
whose elements are the faces of $K$ except the empty set ordered by inclusion.

The \emph{order complex} of a poset $P$ is the simplicial complex which has the elements of $P$ as vertices and the collection of chains
of $P$
$$
\Delta(P) := \big\{ \{p_1, \ldots, p_\ell\} : p_1 < \ldots < p_\ell, \ p_1, \ldots,
p_\ell \in P \big\}.
$$
as faces.
Given a simplicial complex $K$, we get its \emph{barycentric subdivision} $\sd K$ as
$$
\sd K := \Delta(\FF(K)).
$$
Note that the vertices of $\sd K$ are the non-empty faces of $K$ and the faces of $\sd K$ are chains of faces of $K$.

Judged by the definitions, the geometric realizations of $K$ and
$\sd K$ might be completely unrelated. However, it is very convenient to 
derive a concrete realization $|\sd K|$ from $|K|$ in the way suggested by Figure~\ref{f:ualpha}:
Let $f$ be a map from the definition of the geometric realization of $K$. Recall that $V(\sd(K)) = K \setminus \{\emptyset\}$. Then, for the realization of $\sd(K)$, we map a vertex $\alpha \in V(\sd(K))$ to the barycentre of $\emptyset \neq |\alpha| \subseteq |K|$.
This yields $|K| = |\sd K|$ and also
\begin{equation}
\label{e:unionalpha}
 |\alpha| = \bigcup \Big\{ |\Gamma| : \Gamma \in \sd K, \alpha \in \Gamma; \beta \in
 \Gamma \Rightarrow \beta \subseteq \alpha \Big\}
\end{equation}
for $\alpha \in K \setminus {\emptyset}$.
In particular, $|K|$ and $|\sd K|$ are canonically homeomorphic.

\begin{figure}
\begin{center}
\includegraphics{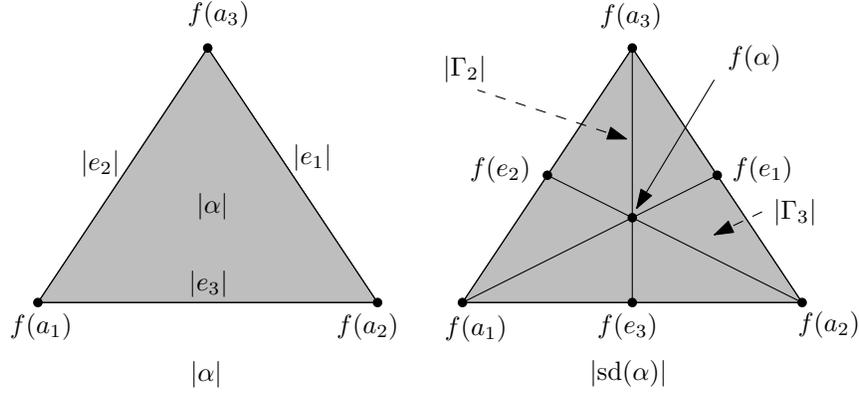}
\caption{
The geometric realization of a simplex $\alpha = \{a_1, a_2, a_3\}$ and
its barycentric subdivision $\sd(\alpha)$.
The geometric realizations of the faces
$\Gamma_2 = \{ a_3, \alpha \}$ and $\Gamma_3 = \{ a_2,e_1,\alpha\}$
of $\sd(\alpha)$
are emphasized.
}
\label{f:ualpha}
\end{center}
\end{figure}


\heading{Posets, geometric lattices.}
We recall some basic facts about posets and lattices and refer to
\cite[Chapter 3]{White1986} and
\cite[Chapter 4]{Birkhoff1995} for more details.

Let $P$ be a poset. For two elements $a$ and $b$ of $P$, we say that $b$ covers $a$, and write $a \lhd b$, if $a < b$ and if there is no $c$ with $a < c < b$.

The \emph{join} of two elements $a,b \in P$ is the unique smallest element $c \in P$ such that $a \leq c$ and $b \leq c$. Note that joins do not necessarily exist in general posets, but they always exist in lattices by definition.

If $P$ contains a unique minimal (resp. maximal) element, we denote it by
$\hat{0}$ (resp. $\hat{1}$). For a poset $P$, we associate another poset
$\bar{P}$ which is obtained from $P$ by removing the minimal element
$\hat 0$, if it exists, as well as the maximal element $\hat{1}$, if it exists. Recall that the \emph{order complex} of $P$ is the simplicial complex which has the elements of $P$ as vertices and the collection of chains
of $P$ as faces. The {\em reduced order complex} of $P$ is the order complex of $\bar{P}$, that is $\Delta(\bar{P})$.

A poset with minimal element $\hat{0}$ is {\em atomistic} if every element is the join of a set of {\em atoms} (elements which cover $\hat{0}$).
We write $\A(P)$ for the set of atoms of $P$.

A poset is graded if every maximal chain in $P$ has the same length. 
A graded poset $P$ has a {\em rank function} $\rk \colon P \rightarrow \N$.
It is
{\em semimodular} if
\[
    \rk(a) + \rk(b) \geq \rk(a \wedge b) + \rk(a \vee b)
\]
for any two different elements $a,b \in P$.
It is {\em modular} if the condition on the rank function is satisfied with
equality, that is, if 
$\rk(a) + \rk(b) = \rk(a \wedge b) + \rk(a \vee b)$
 for all $a,b \in P$.

A graded lattice of finite rank is {\em geometric} if it is both atomistic and semimodular.

Given two posets $P$ and $Q$, a \emph{poset map} is a map $g\colon P \to Q$
such that $g(p_1) \leq g(p_2)$ if $p_1 \leq p_2$. Such a poset map
induces a simplicial map $\tilde{g}\colon \Delta(P) \to \Delta(Q)$ given by
$\tilde{g}(\{p_1, \dots, p_\ell\}) = \{g(p_1), \dots, g(p_\ell)\}$. (Note that
the image of a simplex is a simplex, possibly of lower dimension.)

The {\em product} of two posets $P$ and $Q$ is the poset $P \times Q$ with relation
$(a,b) \leq (a',b')$
if and only if
$a \leq a'$ and $b \leq b'$ for $a,a' \in P$ and $b,b' \in Q$.
We call a poset {\em irreducible} if it cannot be decomposed as the product of two non-trivial smaller posets.

Note that we also use the term {\em irreducible} both for posets
(product-wise) and buildings (join-wise). As for joins, the context should make it clear which meaning we refer to.




\section{Embeddability}
\label{s:embed}

In this section, we overview the notions of an embedding and the
van Kampen obstruction.
We also prove Proposition~\ref{p:map}.
(A proof might be obvious to an expert in the field.)
If not interested in the proof, the reader might want to skip this part.
Later, we will only use Proposition~\ref{p:map} and Lemma~\ref{l:joins} from this section.
In many details, we follow~\cite{melikhov09}.

A \emph{topological embedding} (or just \emph{embedding}) of a simplicial complex $K$ into $\R^d$ is
an injective map $f \colon |K| \rightarrow \R^d$.
For finite $K$, the set $|K|$ is compact and then $f$ is a
homeomorphism between $|K|$ and $f(|K|)$.
If there is an embedding of $K$ into
$\R^d$, we say that $K$ is embeddable in $\R^d$.
Concrete geometric realizations of $K$ can be thought
of as linear embeddings of $K$ in some $\R^d$.
For a short overview about differences between topological, linear and piecewise linear embeddings we refer the reader to Chapter~2 and Appendix~C of \cite{matousek-tancer-wagner11}.

\heading{Deleted products and equivariant maps.}
Let $X$ be a compact topological space.
The \emph{deleted product} of $X$ is the
Cartesian product of $X$ without the diagonal:
$$
\tilde{X} := \{(x,y) \in X \times X : x \neq y\}.
$$
The deleted product is equipped with a natural free $\Z_2$-action which
exchanges the coordinates $(x, y) \mapsto (y,x)$. From now on, we always
assume that $\tilde{X}$ denotes the space together with this action, that is, $\tilde{X}$ is a $\Z_2$-space.
We also denote by $S^m_-$ the $m$-sphere
equipped with the antipodal action $x \mapsto -x$. 

Assuming that there exists an embedding $f\colon X \rightarrow \R^m$, we define
the \emph{Gauss map} $\tilde{f} \colon \tilde{X} \rightarrow S^{m-1}_-$ by
$$
\tilde{f}(x,y) := \frac{f(x)-f(y)}{\|f(x) - f(y)\|}.
$$
This map is \emph{equivariant} which means that the $\Z_2$-actions on
$\tilde{X}$ and $S^{m-1}_-$ commute with this map.

From now on, we assume that $X$ is the geometric realization of a given
simplicial complex $K$, that means $X = |K|$.

The \emph{simplicial deleted product} of $X$ is again a subspace 
of the Cartesian product, now given by the following formula:
$$
\tilde{X}_s := \{(x,y) \in X \times X : \exists \sigma, \tau \in K; \sigma \cap
\tau = \emptyset; x \in |\sigma|, y \in |\tau| \}.
$$

We note that $\tilde{X}$ and $\tilde{X}_s$ are equivariantly homotopy
equivalent, see the remark below Example~3.3 in~\cite{melikhov09} and the
references therein.

\heading{The van Kampen obstruction.}
Now, we are going to define the van Kampen obstruction.
For shortness, we do not define all notions from cohomology theory that are used.
A reader not familiar with cohomology can skip this definition. 
More details can be found in~\cite{melikhov09}.

Let $X = |K|$ be the geometric realization of some simplicial complex $K$ of dimension $d$.
Let $\bar{X}$ be the quotient space $\tilde{X} / \Z_2$ with respect to the
action on $\tilde{X}$. Similarly the projective space $\R P^{2d-1}$ is the
quotient space $S^{2d-1}_-/ \Z_2$ with respect to the antipodal action.
We also need that the infinite projective space $\R P^{\infty}$ is a
classifying space for $\Z_2$. Then we know that there is unique map up to
homotopy $G\colon \bar{X} \rightarrow \R P^{\infty}$, classifying the line
bundle associated with the double cover $\tilde{X} \rightarrow \bar{X}$. The
\emph{van Kampen obstruction} $\vartheta(X)$ is the element $G^*(\xi) \in
H^{2d}(\bar{X}; \Z)$ where $\xi$ is the generator of $H^{2d}(\R P^{\infty};
\Z) \simeq \Z_2$. If there exists an equivariant map $g\colon \tilde{X}
\rightarrow S^{2d-1}_-$, then $\vartheta(X) = i^*(\bar{g}^*(\xi))$ where
$\bar{g}\colon \bar{X} \rightarrow \R P^{2d-1}$ is the quotient map associated
to $g\colon \tilde{X} \rightarrow S^{2d-1}_-$ and $i\colon \R
P^{2d-1} \hookrightarrow \R P^{\infty}$ is the inclusion. Therefore $\vartheta(X) =
0$ in this case, since $H^{2d} (\R P^{2d-1}; \Z) = 0$.

The most well-known result about the van Kampen obstruction is the following.
It is mainly based on the work of Shapiro, Wu, and van
Kampen~\cite{shapiro57, wu65, vankampen32}.

\begin{theorem}
\label{th:vk1}
Let $X = |K|$ be the geometric realization of some simplicial complex $K$ of dimension $d$.
\begin{enumerate}
\item If $X$ embeds into $\R^{2d}$ then the Gauss map provides an equivariant map
  $g\colon \tilde{X} \rightarrow S^{2d-1}_-$.
\item If there is an equivariant map $g\colon \tilde{X} \rightarrow
S^{2d-1}_-$, then the van Kampen obstruction $\vartheta(X)$ is zero.
\item If $d \neq 2$ and the van Kampen obstruction $\vartheta(X)$ is zero, then $X$ embeds into $\R^{2d}$.
\end{enumerate}
\end{theorem}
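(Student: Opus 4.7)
The plan is to handle the three parts in order of increasing difficulty. For (i) the Gauss map $\tilde f(x,y) := (f(x)-f(y))/\|f(x)-f(y)\|$ is well-defined on $\tilde X$ because $f$ is injective, it is continuous as a composition of continuous maps, and it is equivariant because swapping its arguments negates the numerator; no further work is required. For (ii) I would exploit the factorization already indicated in the definition of $\vartheta$: an equivariant map $g\colon \tilde X \to S^{2d-1}_-$ descends to $\bar g\colon \bar X \to \R P^{2d-1}$ between $\Z_2$-quotients, the line bundle over $\bar X$ associated to the double cover $\tilde X \to \bar X$ is then pulled back from the tautological line bundle on $\R P^{2d-1}$, and hence the classifying map $G\colon \bar X \to \R P^\infty$ factors up to homotopy as $i \circ \bar g$. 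Therefore $\vartheta(X) = G^*(\xi) = \bar g^*\bigl(i^*(\xi)\bigr)$, which vanishes because $i^*(\xi)\in H^{2d}(\R P^{2d-1};\Z) = 0$.

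Part (iii) is the classical and delicate direction due to van Kampen, Shapiro, and Wu, and would be the real work. The strategy has two stages. First, by equivariant obstruction theory applied to the free $\Z_2$-CW complex $\tilde X_s$ (which, as the excerpt notes, is equivariantly homotopy equivalent to $\tilde X$), the single obstruction to extending a partial equivariant map into $S^{2d-1}_-$ over the top-dimensional $\Z_2$-cells lives in $H^{2d}(\bar X_s;\Z^{\mathrm{tw}})$ with appropriate twisted integer coefficients, and this class agrees with $\vartheta(X)$; hence $\vartheta(X)=0$ produces an equivariant map $\tilde X_s \to S^{2d-1}_-$. Second, starting from a PL map $|K|\to \R^{2d}$ in general position, the self-intersections form a finite set of transverse double points supported on pairs of disjoint $d$-simplices and carry signed integer intersection numbers $f(\sigma)\cdot f(\tau)$; the vanishing of $\vartheta$ precisely says that, after suitable ``finger moves'' of simplices over one another, these intersection numbers can be made to cancel in pairs. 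One then kills the cancelling pairs by the Whitney trick: span each oppositely oriented pair of double points by an embedded Whitney disk in $\R^{2d}$ and push one sheet across it to eliminate both intersection points.

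The main obstacle will be carrying out the Whitney trick cleanly, and this is exactly where the hypothesis $d\neq 2$ enters: the trick requires ambient dimension $2d\geq 5$, so $d\geq 3$, while the case $d=1$ is handled by classical Kuratowski-type planar arguments with no Whitney disks needed. The excluded case $d=2$ is not a cosmetic nuisance but a genuine failure point, reflecting the well-known breakdown of the Whitney trick in dimension four. For the bookkeeping of signs, the identification of the equivariant primary obstruction with $\vartheta(X)$, and the PL version of the Whitney trick, I would follow Melikhov's survey \cite{melikhov09}, already cited in the paper, where all of these ingredients are set up in the form needed here.
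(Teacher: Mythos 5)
Your proposal is correct and follows essentially the same route as the paper. Parts (i) and (ii) are precisely what the paper establishes in the discussion preceding the theorem statement (your formula $\bar g^*\bigl(i^*(\xi)\bigr)$ even fixes a small notational slip in the paper, which writes $i^*(\bar g^*(\xi))$), and for part (iii) the paper, like you, defers to Melikhov's survey; your sketch of the equivariant-obstruction-plus-Whitney-trick argument is accurate and correctly explains where the hypothesis $d \neq 2$ enters.
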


The statements (i) and (ii) in Theorem \ref{th:vk1} follow directly from our discussion before. For the last statement (iii), see Theorem 3.2 and
the text below the proof in~\cite{melikhov09}.

When $X = |K|$, we write $\vartheta(K)$ instead of $\vartheta(|K|)$  keeping in mind that $\vartheta$ only depends on the topological space $|K|$ and not on the concrete triangulation given by $K$.

We need the following related result.

\begin{proposition}
\label{t:prop}
Let $K$ be a $d$-dimensional simplicial complex.
If there exists a map $f \colon |K| \rightarrow \R^{2d}$ such that for every pair of disjoint simplices $\sigma, \tau \in K$ the
intersection $f(|\sigma|) \cap f(|\tau|)$ is empty,
then $\vartheta(K) = 0$.
\end{proposition}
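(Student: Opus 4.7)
The plan is to reduce to Theorem~\ref{th:vk1}(ii) by constructing an equivariant map from a suitable deleted product of $X = |K|$ into $S^{2d-1}_-$, using $f$ as a substitute for an honest embedding. The usual Gauss map construction requires $f(x) \neq f(y)$ whenever $(x,y) \in \tilde X$, which is exactly the property an embedding provides. Our hypothesis is weaker than injectivity, but it gives this non-equality precisely on the \emph{simplicial} deleted product $\tilde X_s$, and that is enough because $\tilde X$ and $\tilde X_s$ are equivariantly homotopy equivalent (as already noted in the text).

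Concretely, I would first define
\[
    \tilde f \colon \tilde X_s \longrightarrow S^{2d-1}_-, \qquad
    \tilde f(x,y) := \frac{f(x) - f(y)}{\|f(x) - f(y)\|}.
\]
To check this is well-defined, take $(x,y) \in \tilde X_s$. By definition of $\tilde X_s$ there exist disjoint simplices $\sigma, \tau \in K$ with $x \in |\sigma|$ and $y \in |\tau|$. The hypothesis $f(|\sigma|) \cap f(|\tau|) = \emptyset$ then yields $f(x) \neq f(y)$, so the denominator does not vanish. Continuity is inherited from $f$, and the map is equivariant with respect to the coordinate-exchange action on $\tilde X_s$ and the antipodal action on $S^{2d-1}_-$, since swapping $x$ and $y$ negates the numerator.

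Next, I would pass from $\tilde X_s$ to $\tilde X$: choose an equivariant homotopy equivalence $h\colon \tilde X \to \tilde X_s$ (whose existence is recalled in the paper, with reference to~\cite{melikhov09}). The composition $\tilde f \circ h \colon \tilde X \to S^{2d-1}_-$ is then a continuous $\mathbb{Z}_2$-equivariant map. Applying Theorem~\ref{th:vk1}(ii) to this map directly yields $\vartheta(K) = \vartheta(X) = 0$.

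The only potentially delicate point is the equivariant homotopy equivalence $\tilde X \simeq \tilde X_s$, which is the reason for introducing $\tilde X_s$ in the first place; I would just cite it rather than reprove it, as the paper already does. Everything else is a routine verification that the expression defining $\tilde f$ makes sense and is equivariant, so no further difficulty should arise.
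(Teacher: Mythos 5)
Your proposal is correct and follows the same argument as the paper's own proof: define the Gauss map on the simplicial deleted product $\tilde X_s$ (well-defined by the hypothesis on $f$), transfer it to $\tilde X$ via the cited equivariant homotopy equivalence, and invoke Theorem~\ref{th:vk1}(ii). You merely spell out the well-definedness and equivariance checks in slightly more detail than the paper does.
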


\begin{proof}
Assume that $f \colon |K| \rightarrow \R^{2d}$ as in the proposition exists and set $X = |K|$. The condition on $f$ implies that there is an equivariant map $g\colon \tilde{X}_s \rightarrow S^{2d-1}_-$ defined as the Gauss-map by
\[
  g(x,y) = \frac{f(x)-f(y)}{\|f(x) - f(y)\|}.
\]
However, $\tilde{X}$ and $\tilde{X}_s$ are equivariantly homotopic and we also get an equivariant map $g'\colon \tilde{X} \rightarrow S^{2d-1}_-$. Now, $\vartheta(K) = 0$ follows from part (ii) of Theorem \ref{th:vk1}.
\end{proof}

From Proposition \ref{t:prop}, we can deduce our main tool for showing non-embeddability.

\begin{proposition}
\label{p:map}
Let $K$ be a $d$-dimensional simplicial complex with $\vartheta(K) \neq 0$.
Let $L$ be a simplicial complex and $f\colon |K| \rightarrow |L|$ 
be a map satisfying the following condition:
\begin{equation}
\tag{C}
\label{eq:cond}
\text{For every two disjoint } \sigma, \tau \in K \text{ we have } f(|\sigma|) \cap f(|\tau|) = \emptyset.
\end{equation}
Then $L$ does not embed into
$\R^{2d}$.
\end{proposition}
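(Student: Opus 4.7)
The plan is to argue by contradiction and reduce directly to Proposition \ref{t:prop}. Suppose $L$ does embed into $\R^{2d}$, via some embedding $e \colon |L| \hookrightarrow \R^{2d}$. The idea is that composing $e$ with the given map $f$ yields a map $h := e \circ f \colon |K| \to \R^{2d}$ that transports the intersection-preserving hypothesis on $f$ into the Euclidean target, so that Proposition \ref{t:prop} can be applied to $K$ to conclude $\vartheta(K) = 0$, contradicting the assumption $\vartheta(K) \neq 0$.

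The one step that needs verification is that $h$ satisfies the disjointness condition required in Proposition \ref{t:prop}. For disjoint simplices $\sigma, \tau \in K$, hypothesis \eqref{eq:cond} gives $f(|\sigma|) \cap f(|\tau|) = \emptyset$. Since $e$ is injective, the images $e(f(|\sigma|))$ and $e(f(|\tau|))$ of these disjoint subsets of $|L|$ are still disjoint, i.e.\ $h(|\sigma|) \cap h(|\tau|) = \emptyset$. This is the whole content of the reduction: injectivity of $e$ turns the set-level disjointness in $|L|$ into the set-level disjointness in $\R^{2d}$ that Proposition \ref{t:prop} requires.

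Applying Proposition \ref{t:prop} to the $d$-dimensional complex $K$ and the map $h \colon |K| \to \R^{2d}$ now yields $\vartheta(K) = 0$, contradicting the hypothesis. Hence no such embedding $e$ can exist, and $L$ does not embed into $\R^{2d}$.

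The proof is essentially a one-line composition argument, so there is no real obstacle; the only thing to be careful about is that Proposition \ref{t:prop} is stated for arbitrary continuous maps (not embeddings) into $\R^{2d}$, which is exactly what allows the composition $e \circ f$ — which is not required to be injective on all of $|K|$ — to serve as input. Everything else is formal.
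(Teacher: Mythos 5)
Your proof is correct and follows exactly the argument in the paper: assume an embedding $e\colon |L| \to \R^{2d}$, observe that injectivity of $e$ makes $e \circ f$ satisfy the disjointness hypothesis of Proposition~\ref{t:prop}, and derive the contradiction $\vartheta(K) = 0$.
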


\begin{proof}
For contradiction, assume that there is an embedding 
$g\colon |L| \rightarrow \R^{2d}$. 
Then $g \circ f$ satisfies the condition on the map from $|K|$ to $\R^{2d}$ in Proposition \ref{t:prop} and therefore
$\vartheta(K) = 0$ in contradiction to our assumption on $K$.
\end{proof}

It is known that the van Kampen obstruction of $(d+1)$-fold join $D_3^{*(d+1)}$ of the three-point discrete set $D_3$ is nonzero \cite{vankampen32}:
$$
 \vartheta(D_3^{*(d+1)}) \neq 0.
$$
Note that $D_3^{*(d+1)}$ is a $d$-dimensional simplicial complex.
%
%
%
For our main results about non-embeddability of buildings, we will apply
Proposition \ref{p:map} for $K = D_3^{*(d+1)}$.

It will prove useful later to be able to break down non-embeddability proof to complexes which are irreducible with respect to joins, so we need to make sure our methods work fine when taking simplicial joins of complexes.
We refer the reader to~\cite{Matousek2003} for more details about joins of complexes and maps.

\begin{lemma}
\label{l:joins}
Let $\Delta_1$ and $\Delta_2$ be simplicial complexes. Assume that there exist
complexes $K_i$ and maps $f_i\colon |K_i| \to |\Delta_i|$ which
satisfy condition \eqref{eq:cond} in Proposition \ref{p:map} for $i=1,2$. Then there is
a map $f\colon |K_1 * K_2| \to |\Delta_1 * \Delta_2|$ which satisfies \eqref{eq:cond} as
well.
\end{lemma}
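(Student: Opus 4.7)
The plan is to build $f$ as the simplicial join (in the topological sense) of $f_1$ and $f_2$. Recall that $|K_1 * K_2|$ can be identified with the topological join $|K_1| * |K_2|$, whose points have the form $(1-t)x + t y$ for $x \in |K_1|$, $y \in |K_2|$, $t \in [0,1]$, subject to the identification that $(1-t)x + ty$ does not depend on $y$ when $t = 0$, and does not depend on $x$ when $t = 1$. I would define
\[
f\bigl((1-t)x + ty\bigr) := (1-t)\,f_1(x) + t\,f_2(y) \in |\Delta_1 * \Delta_2|.
\]
This is well-defined on the equivalence classes defining the join, since at $t = 0$ the right-hand side does not depend on $y$, and at $t = 1$ it does not depend on $x$; continuity is inherited from $f_1$ and $f_2$.

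Next I would verify condition \eqref{eq:cond}. Every simplex $\sigma \in K_1 * K_2$ has a unique decomposition $\sigma = \sigma_1 \cup \sigma_2$ with $\sigma_i \in K_i$ (allowing $\sigma_i = \emptyset$), and similarly $\tau = \tau_1 \cup \tau_2$. If $\sigma \cap \tau = \emptyset$, then also $\sigma_1 \cap \tau_1 = \emptyset$ and $\sigma_2 \cap \tau_2 = \emptyset$. From the definition of $f$ one gets
\[
f(|\sigma|) = \bigl\{(1-t)\,u + t\,v : u \in f_1(|\sigma_1|),\ v \in f_2(|\sigma_2|),\ t \in [0,1]\bigr\},
\]
with the convention that the parameter $t$ is constrained to $1$ when $\sigma_2 = \emptyset$ and to $0$ when $\sigma_1 = \emptyset$; the analogous description holds for $f(|\tau|)$.

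Suppose for contradiction a common point $p$ exists; write it as $p = (1-t)u + tv = (1-s)u' + sv'$ with $u \in f_1(|\sigma_1|)$, $v \in f_2(|\sigma_2|)$, $u' \in f_1(|\tau_1|)$, $v' \in f_2(|\tau_2|)$. The crucial fact about the join $|\Delta_1 * \Delta_2|$ is that the parameter $t$ and the components $u, v$ are uniquely determined by $p$ whenever they are not forced to be free, so $s = t$, and for $t \in (0,1)$ also $u = u'$ and $v = v'$. This immediately forces $f_1(|\sigma_1|) \cap f_1(|\tau_1|) \neq \emptyset$ (or, when $t \in \{0,1\}$, the analogous statement for the non-degenerate coordinate), contradicting condition \eqref{eq:cond} for $f_1$ or $f_2$.

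The main obstacle is purely a bookkeeping one: one must handle the boundary cases $t \in \{0,1\}$ and the possibility that some of $\sigma_1, \sigma_2, \tau_1, \tau_2$ are empty. In each such case the relevant coordinate ($u$ or $v$) is free and the argument reduces to condition \eqref{eq:cond} for the single map $f_1$ or $f_2$ on the non-degenerate side; for $t \in (0,1)$ one uses both conditions. Once these cases are enumerated, the disjointness of $f(|\sigma|)$ and $f(|\tau|)$ follows in each, completing the proof.
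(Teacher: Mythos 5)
Your proof is correct and matches the paper's approach: define $f = f_1 * f_2$ and use the fact that a point of a topological join uniquely determines its join parameter $t$ and, for $t \in (0,1)$, its two component coordinates, which is precisely the content of the paper's one-line computation $(A_1 * A_2) \cap (B_1 * B_2) = (A_1 \cap B_1) * (A_2 \cap B_2)$. One small slip: with the convention $(1-t)x + ty$, $x \in |K_1|$, $y \in |K_2|$, the constraint when $\sigma_2 = \emptyset$ should be $t = 0$, not $t = 1$ (and vice versa), but this is only a labelling typo and does not affect the argument.
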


\begin{proof}
We set $f = f_1 * f_2 \colon |K_1 * K_2| \rightarrow |\Delta_1 * \Delta_2|$.

If $\sigma = \sigma_1 * \sigma_2$ and $\tau = \tau_1 * \tau_2$ are disjoint
faces of $K_1 * K_2$, then $\sigma_i$ and $\tau_i$ are disjoint faces of $K_i$ for $i=1,2$. We find that
\begin{align*}
  f(|\sigma|) \cap f(|\tau|)
    &= \left( f_1(|\sigma_1|) * f_2(|\sigma_2|) \right) 
      \cap \left( f_1(|\tau_1|) * f_2(|\tau_2|) \right) \\ 
    &= \left( f_1(|\sigma_1|) \cap f_1(|\tau_1|) \right) 
      * \left( f_2(|\sigma_2|) \cap f_2(|\tau_2|) \right) \\
    &= \emptyset * \emptyset = \emptyset. \qedhere
\end{align*}
\end{proof}

\section{Non-embeddable order complexes}
\label{s:ordcom}

\subsection{Non-embeddable order complexes}
\label{s:ordcomsub}

In this section we will develop a method with which one can show that the order complex $\Delta(P)$ of a poset $P$ is not embeddable in $\R^{2d}$ for some $d$ by exposing a subcomplex that is either isomorphic to a known non-embeddable complex or is a weakly degenerated copy of such a complex.

Let $K$ be a simplicial complex and $P$ a poset. Let $g \colon V(K) \rightarrow P$ be a map from the vertices of $K$ to $P$.
We say that $g$ is \emph{extendable} if the join $\bigvee_{x \in \sigma} g(x)$
exists in $P$ for all nonempty faces $\sigma \in K$
and if $\bigvee_{x \in \sigma} g(x)$ is not equal to $\hat{0}$ or $\hat{1}$ for any $\sigma \in K$, if $\hat{0}$ or $\hat{1}$ exist.
If $g$ is extendable then we can extend it to a poset map $g \colon \FF(K) \rightarrow P$ from the face poset $\FF(K)$ of $K$ to $P$ in the canonical way by setting
$$
g(\sigma) := \bigvee_{x \in \sigma} g(x).
$$
(This is a slight abuse of notation because $g$ is a map from the
vertices of $K$ to $P$ and the induced poset map is from the faces of $K$ to
$P$. However, we can identify a vertex $v \in V(K)$ and the face $\{v\} \in K$.) 

The poset map $g : \FF(K) \rightarrow P$ induces a simplicial map
$\tilde{g} \colon \Delta(\FF(K)) \rightarrow \Delta(\bar{P})$
between the order complex $\Delta(\FF(K))$ and the reduced order complex
$\Delta(\bar{P})$
where $\Delta(\FF(K)) = \sd K$ is the barycentric subdivision of $K$.

\begin{definition}
We say that an extendable map $g \colon V(K) \rightarrow P$ is \emph{injective} (resp. \emph{weakly injective}) if the corresponding poset map $g \colon \FF(K) \rightarrow P$ satisfies that
$$
    g(\alpha) \neq g(\beta)
$$
for all $\alpha,\beta \in K$ with $\alpha \neq \beta$ (resp. for all disjoint $\alpha, \beta \in K$).
\end{definition}

As suggested by the notation, all injective extendable maps are also weakly injective.

Because $|\sd K|$ is homeomorphic to $|K|$, we can consider the geometric realization $|\tilde{g}|$ as a map from $|K|$ to $|\Delta(\bar{P})|$.
Because $\tilde{g}$ is simplicial on $\sd K$, the induced map
$|\tilde{g}| \colon |K| \rightarrow |\Delta(\bar{P})|$
is piecewise linear on the geometric realizations $|\sigma|$ of faces $\sigma \in K$.

The following proposition relates weakly injective extendable maps to the
condition stated in Proposition~\ref{p:map}. 

\begin{proposition}
\label{p:disj}
Let $K$ be a simplicial complex and $P$ a poset. If
$g \colon V(K) \rightarrow P$
is a weakly injective map, then the induced map
$|\tilde{g}| \colon |K| \rightarrow |\Delta(\bar{P})|$
satisfies condition \eqref{eq:cond} in
Proposition~\ref{p:map}, that is for every two disjoint $\sigma,\tau \in K$ we have 
$|\tilde{g}|(|\sigma|) \cap |\tilde{g}|(|\tau|) = \emptyset$.
\end{proposition}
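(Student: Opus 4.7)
The plan is to argue by contradiction: assume that some point $x \in |\tilde g|(|\sigma|) \cap |\tilde g|(|\tau|)$ exists for disjoint $\sigma, \tau \in K$, and derive a pair of disjoint faces of $K$ on which $g$ takes equal values, contradicting weak injectivity.

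The crucial ingredient is the decomposition \eqref{e:unionalpha}. Identifying $|K|$ with $|\sd K|$, it gives
\[
  |\sigma| = \bigcup \bigl\{ |\Gamma| : \Gamma \in \sd K,\ \sigma \in \Gamma,\ \text{and every } \beta \in \Gamma \text{ satisfies } \beta \subseteq \sigma \bigr\},
\]
and the analogous statement for $|\tau|$. Hence one can pick chains $\Gamma_\sigma$ and $\Gamma_\tau$ in $\sd K$, consisting entirely of faces of $\sigma$ and of $\tau$ respectively, together with points $y \in |\Gamma_\sigma|$ and $z \in |\Gamma_\tau|$ such that $|\tilde g|(y) = |\tilde g|(z) = x$. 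Since $\tilde g$ is simplicial on $\sd K$, the image $|\tilde g|(|\Gamma|)$ of a chain coincides with the realization $|\tilde g(\Gamma)|$ of the (possibly lower-dimensional) simplex $\tilde g(\Gamma) = \{g(\beta) : \beta \in \Gamma\}$ of $\Delta(\bar P)$. Therefore
\[
  x \in |\tilde g(\Gamma_\sigma)| \cap |\tilde g(\Gamma_\tau)|.
\]

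Now I invoke the defining property of a geometric realization of the simplicial complex $\Delta(\bar P)$ recalled earlier in the paper: the intersection of the realizations of two simplices equals the realization of the intersection of their vertex sets. Since $x$ lies in this intersection, the vertex sets must share an element, i.e.\ there exist $\beta_1 \in \Gamma_\sigma$ and $\beta_2 \in \Gamma_\tau$ with $g(\beta_1) = g(\beta_2)$. By construction $\beta_1 \subseteq \sigma$ and $\beta_2 \subseteq \tau$, and disjointness of $\sigma$ and $\tau$ forces $\beta_1 \cap \beta_2 = \varnothing$; this pair of disjoint faces of $K$ violates weak injectivity of $g$, which is the desired contradiction.

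The conceptually delicate step is the passage from $|\tilde g|(|\Gamma|)$ to $|\tilde g(\Gamma)|$ and the subsequent use of the vertex-intersection property: $\tilde g$ is in general not injective on vertices of $\sd K$, so $\tilde g(\Gamma)$ can be a proper face of $\Gamma$, and one must be careful that the simplex-intersection rule is being applied inside $\Delta(\bar P)$ (where it holds by definition of the realization) rather than inside $\sd K$ (where we only know the images collapse). Beyond this bookkeeping the proof is formal; the main work of the paper lies in constructing weakly injective extendable maps in concrete situations, not in this preparatory proposition.
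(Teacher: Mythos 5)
Your proof is correct and follows essentially the same route as the paper's: decompose $|\sigma|$ and $|\tau|$ via \eqref{e:unionalpha} into realizations of chains in $\sd K$ with maximal face $\sigma$ (resp.\ $\tau$), observe that a common point of the images forces the image chains $\tilde g(\Gamma_\sigma)$ and $\tilde g(\Gamma_\tau)$ to share a vertex (because $\tilde g$ is simplicial and simplices in a geometric realization meet exactly along the realization of their common vertex set), and conclude that this shared vertex is a common value $g(\beta_1)=g(\beta_2)$ on disjoint faces $\beta_1\subseteq\sigma$, $\beta_2\subseteq\tau$, contradicting weak injectivity. The only cosmetic difference is that the paper packages the chain condition through the auxiliary function $\eta(\Gamma)$ denoting the maximal simplex in the chain $\Gamma$, while you quote \eqref{e:unionalpha} directly; your explicit remark that $\tilde g$ may collapse vertices and that the intersection rule must be applied in $\Delta(\bar P)$ rather than $\sd K$ is a helpful clarification of a step the paper states more tersely.
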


\begin{proof}
For contradiction, let $\sigma, \tau \in K$ be disjoint simplices of $K$ such that
\begin{equation}
\label{equ:cond}
 |\tilde{g}|(|\sigma|) \cap |\tilde{g}|(|\tau|) \neq \emptyset.
\end{equation}
Recall that simplices of $\sd K$ are chains of simplices of $K$. We set $\eta(\Gamma)$ to be the maximal simplex of $K$ contained in the chain $\Gamma \in \sd K$.
Following Equation~\eqref{e:unionalpha} from the preliminaries, we then have
$$
 |\sigma| = \bigcup \big\{ |\Gamma| : \Gamma \in \sd K, \; \eta(\Gamma) = \sigma \big\}.
$$
This equation together with assumption \eqref{equ:cond}
implies that there are two chains $\Gamma_1, \Gamma_2 \in \sd K$ such that
$\eta(\Gamma_1) = \sigma$, $\eta(\Gamma_2) = \tau$ and
$$|\tilde{g}|(|\Gamma_1|) \cap |\tilde{g}|(|\Gamma_2|) \neq \emptyset.$$
Since $\tilde{g}$ is simplicial on $\sd K$ and induced by $g \colon \FF(K) \rightarrow P$ (which defines $\tilde{g}$ on the vertices of $\sd K$), we derive that
$$
  \{ g(\alpha) : \alpha \in \Gamma_1 \} \cap \{ g(\beta) : \beta \in \Gamma_2 \} \neq \emptyset.
$$
We thus find $\alpha \in \Gamma_1$ and $\beta \in \Gamma_2$ such
that $g(\alpha) = g(\beta)$. Since $\eta(\Gamma_1) = \sigma$, we have that $\alpha \subseteq \sigma$. Similarly $\beta \subseteq \tau$ and as we assumed $\sigma$ and $\tau$ to be disjoint, also $\alpha$ and $\beta$ are disjoint in contradiction to our assumption on $g$.
\end{proof}

We can now prove the main result of this subsection.

\begin{theorem}
\label{thm:winjmap}
Let $K$ be a $d$-dimensional simplicial complex with $\vartheta(K) \neq 0$ and let $P$ be a poset. If there exists a weakly injective map $g \colon V(K) \rightarrow P$, then
$\Delta(\bar{P})$
is not embeddable in $\R^{2d}$.
\end{theorem}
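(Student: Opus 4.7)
The theorem is essentially a direct assembly of the two propositions we have already proven, namely Proposition~\ref{p:map} and Proposition~\ref{p:disj}. The plan is simply to feed the weakly injective map $g$ through the machinery developed in this section to produce a map $|K| \to |\Delta(\bar{P})|$ satisfying the disjoint-image condition \eqref{eq:cond}, then invoke the van Kampen obstruction to rule out embeddings of $\Delta(\bar{P})$.

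More concretely, I would proceed in three short steps. First, since $g\colon V(K) \to P$ is weakly injective, it is in particular extendable, so it lifts to a poset map $g\colon \FF(K) \to P$ defined by $g(\sigma) = \bigvee_{x \in \sigma} g(x)$, which in turn induces the simplicial map $\tilde{g}\colon \sd K = \Delta(\FF(K)) \to \Delta(\bar{P})$. Second, using the canonical homeomorphism $|K| \cong |\sd K|$, we view the geometric realization as a continuous map $|\tilde{g}|\colon |K| \to |\Delta(\bar{P})|$. By Proposition~\ref{p:disj}, this map sends disjoint simplices of $K$ to disjoint images, i.e.\ it satisfies condition~\eqref{eq:cond} with $L = \Delta(\bar{P})$.

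Finally, since $K$ is $d$-dimensional with $\vartheta(K) \neq 0$, Proposition~\ref{p:map} applies to the map $f = |\tilde{g}|\colon |K| \to |\Delta(\bar{P})|$ and immediately yields that $\Delta(\bar{P})$ cannot embed into $\R^{2d}$, which is the desired conclusion.

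There is no genuine obstacle here: all the real work has been absorbed into Propositions~\ref{p:map} and~\ref{p:disj}, and the definition of weak injectivity was chosen precisely so that the hypothesis of Proposition~\ref{p:disj} holds. The only point to keep in mind while writing the proof is the minor bookkeeping about identifying $|K|$ with $|\sd K|$ and viewing $g$ both as a map on vertices and on faces; once this is noted, the theorem reduces to a one-line chain of implications.
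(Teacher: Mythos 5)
Your proposal is correct and follows exactly the paper's own proof, which simply cites Propositions~\ref{p:disj} and~\ref{p:map}; your extra bookkeeping about $|K|\cong|\sd K|$ and the lift of $g$ to $\FF(K)$ merely spells out what those propositions already encode.
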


\begin{proof}
This follows directly from Proposition \ref{p:disj} and Proposition \ref{p:map}.
\end{proof}

We remark that the existence of a simplicial map $\tilde{g}$, as we construct
it from a weakly injective map $g$, implies that $K$ is a homological minor of $\Delta(\bar{P})$ as introduced in \cite{Wagner2011}. Therefore, Theorem \ref{thm:winjmap} can be seen as an application of the result in \cite{Wagner2011} that the existence of a non-embeddable homological minor in a complex implies non-embeddability of the complex itself.


\begin{example}
\label{ex:fanoplane}
Let $K = K_{3,3}$ be the complete bipartite graph on 6 vertices with three
vertices in each part labelled $1,2,3$ and $4, 5, 6$. The barycentric
subdivision of $K$ is shown in Figure~\ref{f:mapg}(a).

Let $e_1,e_2,e_3$ be a basis of $\F_2^3$ and let $P$ be the poset of subspaces of $\F^3_2$ ordered by inclusion. The elements of $\bar{P}$ correspond to the points and lines of the Fano plane.
The order complex $\Delta(\bar{P})$ is a generalized
3-gon where each vertex has degree three, see Figure~\ref{f:mapg}(b).



We define a map $g \colon V(K_{3,3}) \rightarrow P$ by
\begin{align*}
  g(1) &= \langle e_1 \rangle,& \quad
  g(2) &= \langle e_2 \rangle,& \quad
  g(3) &= \langle e_1 + e_2 \rangle,
\\
  g(4) &= \langle e_1 + e_3 \rangle,& \quad
  g(5) &= \langle e_2 + e_3 \rangle,& \quad
  g(6) &= \langle e_1 + e_2 + e_3 \rangle.
\end{align*}
Clearly, $g$ is extendable. The image of the induced
map $\tilde{g}
\colon \sd K_{3,3} \rightarrow \Delta(\bar{P})$ is shown in Figure \ref{f:mapg}(c). We invite the reader to check that $g$ is weakly injective.
This shows that $\Delta(\bar{P})$ cannot be embedded in $\R^2$, that means it is a
non-planar graph. (Non-planarity of $\Delta(\bar{P})$ can easily be shown via
Kuratowski's theorem, of course. This example merely serves to demonstrate our
method.)

\begin{figure}
\centering
\includegraphics{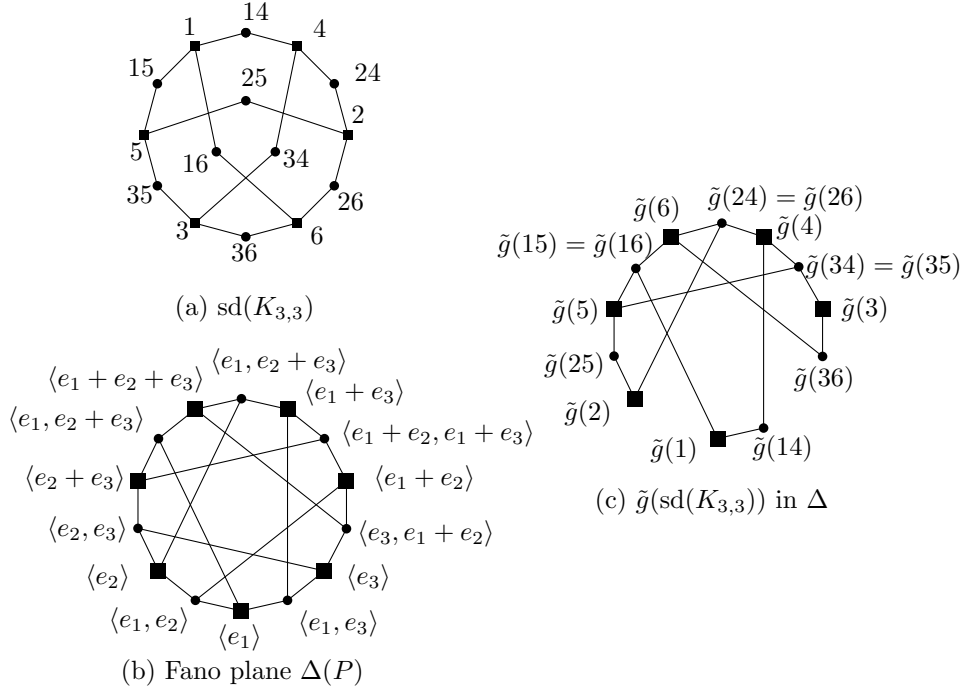}
\caption{Showing non-planarity of the Fano plane}
\label{f:mapg}
\end{figure}
\end{example}

\subsection{Independent and almost independent configurations}
\label{s:vecconf}

We will now consider the case where $K = D_3^{*(d+1)}$. We will reformulate a sufficient condition for the existence of an extendable injective map $g : V(K) \rightarrow P$ in terms of certain collections of atoms in $P$.

Let us fix some $d \geq 1$. Throughout the section, we set $K = D_3^{*(d+1)}$ and denote the vertex set of $D_3^{*(d+1)}$ by
\[
    V \left( D_3^{*(d+1)} \right) = \{ v_{i,j} : i \in [d+1], j \in [3] \},
\]
where the vertex $v_{i,j}$ corresponds to the $j$-th vertex of the $i$-th copy of $D_3$.
The maximal faces of $D_3^{*(d+1)}$ are sets of the form
\[
  \left\{ v_{1,j_1},\ldots,v_{d+1,j_{d+1}} \right\}
    \in \FF \left( D_3^{*(d+1)} \right)
\]
for any choice of $j_1,\ldots,j_{d+1} \in [3]$.

Given a collection
$$
 \left\{ x_{i,j} : i \in [d+1], j \in [3] \right\} \subseteq \A(P)
$$
of {\em atoms},
we can associate a map $g \colon V(D_3^{*(d+1)}) \rightarrow P$ defined by
$
 g \left( v_{i,j} \right) = x_{i,j}
$.

As before, the map $g$ is extendable if the join
$$
 \bigvee_{i \in [d+1]} x_{i, j_i}
$$
exists in $P$
and is not equal to the maximal element $\hat{1}$ (if it exists)
for all choices $j_1,\ldots,j_{d+1} \in [3]$.
In that case, we call the collection of atoms $\{ x_{i,j} : i \in [d+1], j \in [3] \}$ an \emph{extendable atom configuration}.

\begin{remark}
In general, it would not be necessary to choose $x_{i,j}$ as atoms in $P$. However, this will always be the case in our applications.
\end{remark}

We will now reformulate the condition of $g$ being (weakly) injective into the setting of extendable atom configurations.

\begin{definition}
\label{def:windconf}
We call an extendable atom configuration
\[
  \{ x_{i,j}: i \in [d+1], j \in [3] \} \subseteq \A(P)
\]
{\em independent} (resp. {\em weakly independent}) if the corresponding map
$g \colon V \left( D_3^{*(d+1)} \right) \rightarrow P$ is injective (resp. weakly injective).
\end{definition}

It will become clear in a later section why we chose to use the term weakly independent atom configurations.
We can now reformulate Theorem \ref{thm:winjmap} in the setting of atom configurations.

\begin{proposition}
\label{p:Xnonemb}
If the poset $P$ contains a weakly independent atom configuration
$\{ x_{i,j}: i \in [d+1], j \in [3] \}$,
then $\Delta(\bar{P})$ is not embeddable in $\R^{2d}$.
\end{proposition}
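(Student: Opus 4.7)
The plan is to observe that this proposition is essentially a translation of Theorem \ref{thm:winjmap} into the language of atom configurations, for the specific choice $K = D_3^{*(d+1)}$. So the proof should be very short, amounting to verifying that the hypotheses of Theorem \ref{thm:winjmap} are satisfied.

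First, I would set $K := D_3^{*(d+1)}$ and recall two facts already established in the excerpt: $K$ is a $d$-dimensional simplicial complex, and by the van Kampen result quoted before Lemma \ref{l:joins} we have $\vartheta(K) \neq 0$. These take care of the hypotheses on $K$.

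Next, I would unpack the definitions to produce the required weakly injective map $g \colon V(K) \to P$. Using the indexing $V(K) = \{v_{i,j} : i \in [d+1], j \in [3]\}$ fixed in the section, I set $g(v_{i,j}) := x_{i,j}$. Because $\{x_{i,j}\}$ is an extendable atom configuration, the joins $\bigvee_{x \in \sigma} g(x)$ exist in $P$ and avoid $\hat 0$ and $\hat 1$ for every $\sigma \in K$ (every face of $K$ is contained in a maximal face of the form $\{v_{1,j_1},\dots,v_{d+1,j_{d+1}}\}$, so extendability on maximal faces suffices), hence $g$ is extendable. By Definition \ref{def:windconf}, weak independence of $\{x_{i,j}\}$ means exactly that $g$ is weakly injective in the sense of Section \ref{s:ordcomsub}.

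Finally, I would invoke Theorem \ref{thm:winjmap} with this $K$ and this $g$ to conclude that $\Delta(\bar P)$ does not embed in $\R^{2d}$. There is no hard step here; the main thing to be careful about is that the indexing used in the definition of weak independence matches the indexing of the vertex set of $D_3^{*(d+1)}$ fixed at the start of Section \ref{s:vecconf}, and that ``extendable atom configuration'' supplies precisely the extendability condition (existence of joins and avoidance of $\hat 0, \hat 1$) required to form the induced poset map $\FF(K) \to P$ and hence the simplicial map $\sd K \to \Delta(\bar P)$ that underlies Theorem \ref{thm:winjmap}.
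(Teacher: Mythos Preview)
Your proposal is correct and follows exactly the paper's approach: invoke Definition~\ref{def:windconf} to see that the associated map $g$ is weakly injective, recall that $\vartheta(D_3^{*(d+1)}) \neq 0$, and apply Theorem~\ref{thm:winjmap}. You have simply unpacked the extendability step more explicitly than the paper, which compresses the whole argument into two sentences.
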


\begin{proof}
By Definition~\ref{def:windconf}, the map $g$ corresponding to a weakly independent atom configuration is weakly injective. Also, $D_3^{*(d+1)}$ satisfies $\vartheta(D_3^{*(d+1)}) \neq 0$ and thus $\Delta(\bar{P})$ cannot be embedded in $\R^{2d}$ by Theorem \ref{thm:winjmap}.
\end{proof}

\begin{remark}
Clearly, every independent atom configuration is weakly independent by
definition. In most of the cases we encounter it would be sufficient for us 
to consider independent atom configurations. However, in case of projective
spaces over $\F_2$, weakly independent configurations will play a key role.

If $P$ contains an independent atom configuration, then $g \colon
\FF(D_3^{*(d+1)}) \rightarrow P$ is injective and $\tilde{g}$ is in fact an
isomorphism of $\sd D_3^{*(d+1)}$ and its image under $\tilde{g}$. Thus,
$\Delta(\bar{P})$ contains an isomorphic copy of the barycentric subdivision of
$D_3^{*(d+1)}$. This subcomplex is a straightforward certificate for the
non-embeddability of $\Delta(\bar{P})$ which does not use Proposition~\ref{p:Xnonemb}
and weakly independence, and therefore our presentation could be simplified if
we considered only the `independent' case. 
\end{remark}

We give a sufficient condition for atom configurations to be independent and weakly independent.
The following two lemmas combined with Proposition \ref{p:Xnonemb} are our main tools for showing non-embeddability in the rest of the paper.

\begin{lemma}
Let $\{ x_{i,j}: i \in [d+1], j \in [3] \} \subseteq \A(P)$ be an extendable atom configuration. If
\begin{equation}
\label{equ:indconf}
  x_{i,j} \not\leq \bigvee_{i \in [d+1]} x_{i,j_i}
\end{equation}
for any choice of $i \in [d+1]$ and $j,j_1,\ldots,j_{d+1} \in [3]$ with $j \not= j_i$, then the atom configuration is independent.
\end{lemma}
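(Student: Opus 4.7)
The plan is to establish injectivity of $g$ directly by showing $g(\alpha)=g(\beta)$ forces $\alpha=\beta$. Parametrize faces of $K=D_3^{*(d+1)}$ as follows: any $\alpha\in K$ is of the form $\alpha=\{v_{i,a_i}:i\in I_\alpha\}$ for some index set $I_\alpha\subseteq[d+1]$ and some choice of $a_i\in[3]$ for each $i\in I_\alpha$. With this notation, $g(\alpha)=\bigvee_{i\in I_\alpha}x_{i,a_i}$, and $\alpha=\beta$ iff $I_\alpha=I_\beta$ and $a_i=b_i$ for every $i$.

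Assume toward contradiction that $\alpha\neq\beta$ but $g(\alpha)=g(\beta)$. Then there must exist an index $i\in[d+1]$ witnessing the discrepancy; more precisely, up to swapping the roles of $\alpha$ and $\beta$, one of the following holds: (i) $i\in I_\alpha\cap I_\beta$ and $a_i\neq b_i$, or (ii) $i\in I_\alpha\setminus I_\beta$. In both situations I can extend $\beta$ to a maximal face $\beta^*=\{v_{i',j_{i'}}:i'\in[d+1]\}$ of $K$ in such a way that $j_i\neq a_i$: in case (i) take $j_{i'}=b_{i'}$ for $i'\in I_\beta$ (so in particular $j_i=b_i\neq a_i$) and extend arbitrarily elsewhere; in case (ii) take $j_{i'}=b_{i'}$ for $i'\in I_\beta$ and, since $i\notin I_\beta$, freely pick $j_i\in[3]\setminus\{a_i\}$ and extend the remaining coordinates arbitrarily.

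Now combine the two: since $\beta\subseteq\beta^*$ in $\FF(K)$ we have $g(\beta)\leq g(\beta^*)=\bigvee_{i'\in[d+1]}x_{i',j_{i'}}$, whereas $x_{i,a_i}\leq g(\alpha)=g(\beta)$. Chaining these inequalities yields
\[
x_{i,a_i}\;\leq\;\bigvee_{i'\in[d+1]}x_{i',j_{i'}}.
\]
But $a_i\neq j_i$, so this contradicts hypothesis \eqref{equ:indconf} applied with $j=a_i$. Hence $g$ is injective on $\FF(K)$ and the configuration is independent.

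The only real subtlety is in step two, namely arranging the extension $\beta^*$ so that $j_i\neq a_i$ for the chosen witness index; once that choice is made, the conclusion is an immediate invocation of \eqref{equ:indconf}. Note that no extra hypothesis on $\beta$ is required because in case (ii) the coordinate $i$ is free, and in case (i) the coordinate is already occupied by $b_i\neq a_i$, so we do not need to modify the existing choices of $\beta$.
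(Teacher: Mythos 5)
Your proof is correct and follows essentially the same strategy as the paper's: locate a vertex of one simplex not in the other, extend the other to a maximal face avoiding that vertex's label at the relevant coordinate, and chain the order relations to contradict \eqref{equ:indconf}. Your case split (same index with different label vs.\ index absent) just spells out more explicitly how a vertex $v_{i,a_i} \in \alpha \setminus \beta$ can arise, which the paper handles in one line.
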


\begin{proof}
Assume that $\{ x_{i,j}: i \in [d+1], j \in [3] \} \subseteq \A(P)$ is an extendable atom configuration that satisfies condition \eqref{equ:indconf}.
We need to show that the corresponding map $g \colon \FF(D_3^{*(d+1)}) \rightarrow P$ is injective.
For that, let $\alpha, \beta \in \FF(D_3^{*(d+1)})$ be two simplices such that $\alpha \neq \beta$. 
Then we can find $v_{i,j} \in \alpha \setminus \beta$ (by possibly changing names of $\alpha$ and $\beta$).
We also find maximal faces $\sigma, \tau \in \FF(D_3^{*(d+1)})$ such that $\alpha \subseteq \sigma$, $\beta \subseteq \tau$ and $v_{i,j} \not\in \tau$.
Then condition \eqref{equ:indconf} implies that
$
 g(v_{i,j}) \not\leq g(\tau)
$.
However, $g(v_{i,j}) \leq g(\alpha)$ and $g(\beta) \leq g(\tau)$ which yields that $g(\alpha) \neq g(\beta)$. Thus, $g$ is injective and the atom configuration is independent.
\end{proof}

The criterion for weakly independent vector configurations is slightly more
technical to state. However, we invite the reader to check that it is an even
more immediate translation of the definition of a weakly extendable map to the
setting of atom configurations (and moreover even in an equivalent form).

\begin{lemma}
\label{l:char_wimap}
Let $\{ x_{i,j}: i \in [d+1], j \in [3] \} \subseteq \A(P)$ be an extendable
atom configuration. This atom configuration is weakly independent if and
only if for any choice of nonempty sets $I, I' \subseteq [d+1]$ and indices $j_i \in [3]$ for each $i \in I$ and $j'_i \in [3]$ for each $i \in I'$ satisfying $j_i \neq j'_i$ for every $i \in I \cap I'$ the following condition is satisfied:
\begin{equation}
\label{equ:windconf}
    \bigvee_{i \in I} x_{i,j_i} \neq 
    \bigvee_{i \in I'} x_{i,j'_i}.
\end{equation}
\qed
\end{lemma}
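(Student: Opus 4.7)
The approach is simply to unwind the bijection between nonempty faces of $K = D_3^{*(d+1)}$ and the parametrizations $(I, (j_i)_{i \in I})$ appearing in the statement, and then observe that the two sides of the ``iff'' are the same condition written in different languages.

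First I would record the following bookkeeping. Every nonempty face $\alpha \in \FF(K)$ is, by the definition of a join of copies of $D_3$, of the form $\alpha = \{v_{i,j_i} : i \in I\}$ for a unique nonempty $I \subseteq [d+1]$ and a unique choice of $j_i \in [3]$ for each $i \in I$. Under the extension of $g$ to $\FF(K)$, this face is sent to $g(\alpha) = \bigvee_{i \in I} x_{i,j_i}$, and extendability of the atom configuration guarantees that the join exists and is not $\hat 0$ or $\hat 1$.

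Next, I would check the disjointness translation. If $\alpha = \{v_{i,j_i} : i \in I\}$ and $\beta = \{v_{i,j'_i} : i \in I'\}$, then $\alpha \cap \beta = \{v_{i,j_i} : i \in I \cap I',\ j_i = j'_i\}$. Hence $\alpha$ and $\beta$ are disjoint (as vertex sets) precisely when $j_i \neq j'_i$ for every $i \in I \cap I'$; this includes the case $I \cap I' = \emptyset$, where the condition is vacuously satisfied.

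Combining these two observations, pairs of disjoint nonempty faces $(\alpha,\beta)$ of $K$ are in bijection with quadruples $(I,(j_i)_{i\in I},I',(j'_i)_{i\in I'})$ of nonempty index sets and index choices satisfying $j_i \neq j'_i$ for every $i \in I \cap I'$. Under this bijection, the inequality $g(\alpha) \neq g(\beta)$ required for weak injectivity is exactly the inequality $\bigvee_{i \in I} x_{i,j_i} \neq \bigvee_{i \in I'} x_{i,j'_i}$ of the lemma. Since by Definition \ref{def:windconf} the atom configuration is weakly independent iff the associated map $g$ is weakly injective, the stated equivalence follows. The proof is entirely a translation of definitions, so there is no essential obstacle; the only point that requires a moment of care is the boundary case $I \cap I' = \emptyset$, which is handled by the vacuous reading of the condition ``$j_i \neq j'_i$ for every $i \in I \cap I'$''.
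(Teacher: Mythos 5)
Your proof is correct and is exactly the ``immediate translation'' the paper alludes to but does not spell out: it unwinds the bijection between nonempty faces of $D_3^{*(d+1)}$ and pairs $(I,(j_i)_{i\in I})$, identifies disjointness of faces with the condition $j_i\neq j'_i$ on $I\cap I'$, and then observes that $g(\alpha)\neq g(\beta)$ for disjoint faces is literally condition~\eqref{equ:windconf}. Nothing is missing; the attention to the boundary case $I\cap I'=\emptyset$ is a sensible sanity check.
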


%

\section{Order complexes of finite projective spaces}
\label{s:projspace}

In this section, we apply our methods from the previous section and show non-embeddability of order complexes of the lattice of subspaces of a finite projective space. 

Let $\F_q$ be a finite field and let $\LL(\F_q^{d+2})$ be the geometric lattice of linear subspaces of $\F_q^{d+2}$ partially ordered by inclusion.
Set
$\Delta = \Delta(\bar{\LL(\F_q^{d+2})})$
to be
 the reduced order complex of $\LL(\F_q^{d+2})$. Thus, $\Delta$ is the simplicial complex whose maximal faces are given by complete flags of non-trivial subspaces of $\F_q^{d+2}$.

The most popular example is obtained from the Fano plane, that is the projective plane over $\F_2$.
We recall that the order complex of the Fano plane is shown in Figure~\ref{f:mapg}(b).



\begin{theorem}
\label{t:typeA}
If $\Delta$ is the reduced order complex of the lattice of subspaces of the
finite projective space $\F^{d+2}_q$ with $d \geq 2$, then $\Delta =
\Delta(\bar{\LL(\F_q^{d+2})})$ does not embed into $\R^{2d}$.  
\end{theorem}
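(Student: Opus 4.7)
By Proposition~\ref{p:Xnonemb}, it suffices to exhibit a weakly independent atom configuration $\{x_{i,j}:i\in[d+1],\ j\in[3]\}\subseteq\A(\LL(\F_q^{d+2}))$. The atoms of $\LL(\F_q^{d+2})$ are the one-dimensional subspaces, and the join operation is the sum of subspaces. Since the join of any $d+1$ atoms has dimension at most $d+1<d+2$, it never equals $\hat 1=\F_q^{d+2}$, and it is trivially never $\hat 0$. Hence extendability of any such atom configuration in $\LL(\F_q^{d+2})$ is automatic, and only weak independence in the sense of Lemma~\ref{l:char_wimap} needs to be verified.

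Fix a basis $e_1,\dots,e_{d+2}$ of $\F_q^{d+2}$. The main case is $q\geq 3$. Pick three distinct scalars $\alpha_1,\alpha_2,\alpha_3\in\F_q$ and set
\[
  x_{i,j}\;:=\;\langle e_i+\alpha_j\,e_{d+2}\rangle,\qquad i\in[d+1],\ j\in[3].
\]
For any nonempty $I\subseteq[d+1]$ and choice $(j_i)_{i\in I}$, the corresponding join is the $|I|$-dimensional subspace $S_{I,(j_i)}:=\operatorname{span}\{e_i+\alpha_{j_i}e_{d+2}:i\in I\}$. The image of $S_{I,(j_i)}$ under the quotient projection $\F_q^{d+2}\to\F_q^{d+2}/\langle e_{d+2}\rangle$ is exactly $\langle e_i:i\in I\rangle$, which recovers $I$; once $I$ is known, the $e_{d+2}$-coefficient of the (unique) element of $S_{I,(j_i)}$ whose projection equals $e_i$ is $\alpha_{j_i}$, and since the $\alpha_j$ are distinct this recovers the tuple $(j_i)_{i\in I}$. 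Thus distinct pairs $(I,(j_i))$ yield distinct joins, so the configuration is even independent, and in particular Lemma~\ref{l:char_wimap} applies.

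The remaining case $q=2$ is more delicate: $\F_2$ contains only two scalars, so the formula above produces only two distinct lines per row. The plan is to keep $x_{i,1}=\langle e_i\rangle$ and $x_{i,2}=\langle e_i+e_{d+2}\rangle$ and to replace $x_{i,3}$ by a suitably modified line, for instance of the form $\langle e_i+e_{\pi(i)}+e_{d+2}\rangle$ for some fixed-point-free permutation $\pi$ of $[d+1]$ (available because $d\geq 2$ forces $|[d+1]|\geq 3$), or some similar triangular modification. The resulting atom configuration is no longer independent, and this is precisely why we developed the weaker notion; the main obstacle of the proof is then a somewhat careful case analysis verifying that every coincidence of joins $\bigvee_{i\in I}x_{i,j_i}=\bigvee_{i\in I'}x_{i,j'_i}$ must occur through some index $i\in I\cap I'$ with $j_i=j'_i$, so that the hypothesis of Lemma~\ref{l:char_wimap} is not actually triggered and the configuration is indeed weakly independent. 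Once that verification is done, Proposition~\ref{p:Xnonemb} yields the claimed non-embeddability in $\R^{2d}$.
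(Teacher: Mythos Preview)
Your treatment of the case $q\geq 3$ is correct and coincides with the paper's Proposition~\ref{p:conftypeA}: the same vectors $e_i+\lambda_j e_{d+2}$, and your projection argument to recover $(I,(j_i))$ is a clean reformulation of the paper's independence check.

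The case $q=2$, however, is left as a sketch, and the specific construction you suggest does not work in general. If the fixed-point-free permutation $\pi$ contains a $2$-cycle, say $\pi(1)=2$ and $\pi(2)=1$, then
\[
  x_{1,3}=\langle e_1+e_2+e_{d+2}\rangle=\langle e_2+e_1+e_{d+2}\rangle=x_{2,3},
\]
so the disjoint singleton faces $\{v_{1,3}\}$ and $\{v_{2,3}\}$ have the same join and weak independence fails outright. Even restricting to a single long cycle, you have not carried out the verification you yourself identify as ``the main obstacle'', so the proof is genuinely incomplete for $q=2$. The paper takes a different route here (Proposition~\ref{p:conftypeA2}): instead of perturbing the third column by a permutation inside the first $d+1$ coordinates, it spends \emph{two} auxiliary basis vectors $e_{d+1},e_{d+2}$ to get three distinct ``shift'' vectors $w_1=e_{d+1}$, $w_2=e_{d+2}$, $w_3=e_{d+1}+e_{d+2}$, and compensates by making the last row vector dependent, $u_{d+1}=e_1+e_2$. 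That choice leads to a manageable four-case analysis on $\dim U$ which the paper carries out in full. If you want to rescue your approach you would need to pin down a concrete modification and actually perform the case analysis; otherwise, adopt the paper's configuration.
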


\begin{remark}
The $d=1$ case of Theorem \ref{t:typeA} corresponds to order complexes of finite projective planes.
Even though a complete classification of finite projective planes seems out of
reach, we still know that their order complexes are non-planar; see proof of
Theorem~\ref{t:buildings}~(i).
%
\end{remark}

For proving Theorem \ref{t:typeA}, we construct (weakly) independent atom configurations in the lattice of subspaces $\LL(\F_q^{d+2})$.
Note that atoms correspond to points in the projective space or, equivalently, one-dimensional subspaces in the underlying finite vector space. The join of atoms is just the subspace that is spanned by the corresponding points.

Recall that $\langle \cdot \rangle$ denotes the subspace spanned by a set of points in a vector space. For simplicity, we will call a family of points
\[
   \big\{ x_{i,j}: i \in [d+1], j \in [3] \big\}
\]
in $\F_q^{d+2}$ a {\em (weakly) independent vector configuration} if the corresponding family of one-dimensional subspaces
\[
  \big\{ \langle x_{i,j} \rangle : i \in [d+1], j \in [3] \big\}
\]
is a (weakly) independent atom configuration in $\LL(\F_q^{d+2})$.
We make the following observations:
\begin{enumerate}
\item[(P1)]
  The join of any number of atoms exists in $\LL(\F_q^{d+2})$.
  Furthermore, every map $g : V(D_3^{*(d+1)}) \rightarrow \A(\LL(\F_q^{d+2}))$ is extendable.
\item[(P2)]
  Condition \eqref{equ:indconf} translates as follows to vector configurations: If
\begin{equation}
\label{equ:indconfvector}
  x_{i,j} \not\in \langle x_{i,j_i} : i \in [d+1] \rangle
\end{equation}
for any choice of $i \in [d+1]$ and $j,j_1,\ldots,j_{d+1} \in [3]$ with $j \not= j_i$, then $\{ x_{i,j}: i \in [d+1], j \in [3] \} \subseteq \A(P)$ is an independent vector configuration.
\item[(P3)]
  Similarly, Condition \eqref{equ:windconf} translates to: Let  be an extendable atom configuration. If
for any choice of nonempty sets $I, I' \subseteq [d+1]$ and indices $j_i \in [3]$ for each $i \in I$ and $j'_i \in [3]$ for each $i \in I'$ satisfying $j_i \neq j'_i$ for every $i \in I \cap I'$ the following condition is satisfied:
\begin{equation}
\label{equ:windconfvector}
    \langle x_{i,j_i} : i \in I \rangle \neq 
    \langle x_{i,j'_i} : i \in I' \rangle,
\end{equation}
then $\{ x_{i,j}: i \in [d+1], j \in [3] \}$ is a weakly independent vector configuration.
\end{enumerate}

We need to distinguish the case where $q = 2$ which will be treated in the latter part of this section. For now, assume that $q \geq 3$.

\begin{proposition}
\label{p:conftypeA}
Let $B = \{e_1,e_2,\ldots,e_{d+2}\}$ be any basis of $\F_q^{d+2}$ where $q \geq 3$.
Then the vector configuration
\[
    x_{i,j} = e_i + \lambda_j e_{d+2}, \quad i \in [d+1], j \in [3]
\]
is independent where $\lambda_j \in \F_q$ for each $j \in [3]$ are such that $\lambda_j \neq \lambda_{j'}$ if $j \neq j'$.
\end{proposition}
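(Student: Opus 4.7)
The plan is to verify condition~\eqref{equ:indconfvector} from observation (P2), which guarantees independence of the vector configuration. So I will fix arbitrary $i \in [d+1]$ and indices $j, j_1,\ldots,j_{d+1} \in [3]$ with $j \neq j_i$, and show that $x_{i,j} \notin \langle x_{i', j_{i'}} : i' \in [d+1] \rangle$. Everything will reduce to a routine coordinate comparison in the basis $B$, so the hypothesis $q \geq 3$ will only enter through the existence of three pairwise distinct scalars $\lambda_1,\lambda_2,\lambda_3 \in \F_q$.

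I would argue by contradiction. Suppose there exist $\mu_1,\ldots,\mu_{d+1} \in \F_q$ with
\[
    e_i + \lambda_j e_{d+2}
    \;=\; x_{i,j}
    \;=\; \sum_{i' \in [d+1]} \mu_{i'}\, x_{i', j_{i'}}
    \;=\; \sum_{i' \in [d+1]} \mu_{i'} e_{i'} \;+\; \Bigl(\sum_{i' \in [d+1]} \mu_{i'} \lambda_{j_{i'}}\Bigr) e_{d+2}.
\]
The next step is to read off coefficients in the basis $B = \{e_1,\ldots,e_{d+2}\}$. Comparing the coefficient of $e_{i'}$ for each $i' \in [d+1]$ gives $\mu_i = 1$ and $\mu_{i'} = 0$ for all $i' \neq i$. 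Substituting into the coefficient of $e_{d+2}$ then reduces the equation to $\lambda_{j_i} = \lambda_j$, contradicting the pairwise distinctness of the $\lambda_k$'s together with the assumption $j \neq j_i$.

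Once this contradiction is obtained, the configuration satisfies \eqref{equ:indconfvector} and hence is independent by (P2). I do not anticipate any real obstacle: the only care needed is to match the quantifier pattern in condition~\eqref{equ:indconfvector} exactly (``for every $i$ and every choice of $j, j_1,\ldots,j_{d+1}$ with $j \neq j_i$''), so that the coordinate argument is carried out for an arbitrary such tuple rather than for one special choice.
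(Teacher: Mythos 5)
Your proof is correct and follows essentially the same route as the paper: verify condition (P2) by comparing coefficients in the basis $B$, conclude $\mu_i=1$, $\mu_{i'}=0$ for $i'\neq i$, and derive the contradiction $\lambda_{j_i}=\lambda_j$. The paper phrases it slightly less explicitly (it says no vector $x_{k,j_k}$ with $k\neq i$ can appear in the combination) but the argument is identical.
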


\begin{proof}
As stated in observation (P1), the given atom configuration is extendable.
As stated in (P2), we need to check that 
$$
    e_i + \lambda_j e_{d+2}
        \not\in \langle e_1 + \lambda_{j_1} e_{d+2}, \ldots,
        e_{d+1} + \lambda{j_{d+1}} e_{d+2} \rangle.
$$
for any choice of $i \in [d+1]$ and $j,j_1,\ldots,j_d \in [3]$ with $j \neq j_i$.
For contradiction, assume that $e_i + \lambda_j e_{d+2}$ is a nontrivial
linear combination of vectors from the right-hand side. Than no vector $e_k +
\lambda_{j_k} e_{d+2}$ with $k \neq i$ appears in this combination since it is the
only vector with nontrivial coefficient at $e_k$. Then $e_i + \lambda_j e_{d+2} \in
\langle e_i + \lambda_{j_i}  e_{d+2} \rangle$ which contradicts $j \neq j_i$.
\end{proof}

Let us now consider the special case where $q = 2$.
Thus, $\Delta = \Delta(\overline{\LL(\F_2^{d+2})})$ is the reduced order complex of the lattice of subspaces of $\F_2^{d+2}$.

\begin{proposition}
\label{p:conftypeA2}
Choose any basis $\{e_1,e_2,\ldots,e_{d+2}\}$ of $\F_2^{d+2}$. Then the vector configuration
\[
    x_{i,j} = u_i + w_j, \quad i \in [d+1],j \in [3]
\]
is weakly independent where $u_k = e_k$ for $k \in
[d]$, $u_{d+1} = e_1 + e_2$ as well as
$$
 w_1 = e_{d+1}, \quad w_2 = e_{d+2}, \quad w_3 = e_{d+1} + e_{d+2}.
$$
\end{proposition}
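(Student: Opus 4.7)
I would first note that extendability of the configuration is immediate: joins in $\LL(\F_2^{d+2})$ always exist, and the join of any $d+1$ atoms has dimension at most $d+1 < d+2$, so it is a proper nonzero subspace. It remains to verify condition~\eqref{equ:windconfvector} from~(P3). The key tool is the decomposition $\F_2^{d+2} = Y \oplus Z$ with $Y = \langle e_1,\ldots,e_d\rangle$ and $Z = \langle e_{d+1},e_{d+2}\rangle$, with corresponding projections $\pi$ onto $Y$ and $\sigma$ onto $Z$. Each $x_{i,j}$ splits as $u_i + w_j$ with $u_i \in Y$ and $w_j \in Z$, and the three nonzero elements $w_1,w_2,w_3$ of $Z$ satisfy $w_a + w_b = w_c$ whenever $\{a,b,c\} = \{1,2,3\}$.

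Assume for contradiction that $V := \langle x_{i,j_i} : i \in I\rangle$ equals $V' := \langle x_{i,j'_i} : i \in I'\rangle$ while $j_i \neq j'_i$ for every $i \in I \cap I'$. In the main case $I \cap I' \neq \emptyset$, pick any $i^* \in I \cap I'$: both $x_{i^*,j_{i^*}}$ and $x_{i^*,j'_{i^*}}$ lie in $V$, so their sum $w_{j_{i^*}} + w_{j'_{i^*}} = w_k$ (with $k$ the third element of $[3]$) lies in $V \cap Z$ and is nonzero. A direct calculation shows that the kernel of the map $S \mapsto u(S) := \sum_{i \in S} u_i$ on $2^{[d+1]}$ is exactly $\{\emptyset, A\}$ with $A = \{1,2,d+1\}$, so $V \cap Z$ is nontrivial only when $A \subseteq I$, in which case $V \cap Z = \{0,\, w_{j_1} + w_{j_2} + w_{j_{d+1}}\}$. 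Hence $A \subseteq I$ and $w_{j_1} + w_{j_2} + w_{j_{d+1}} = w_k$; symmetrically $A \subseteq I'$ and $w_{j'_1} + w_{j'_2} + w_{j'_{d+1}} = w_k$.

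Applying the same difference argument at each of $i^* = 1, 2, d+1$ (all of which lie in $I \cap I'$) gives $w_{j_i} + w_{j'_i} = w_k$, which forces $\{j_i, j'_i\} = \{a, b\} := [3] \setminus \{k\}$ for every $i \in A$. But then $w_{j_1} + w_{j_2} + w_{j_{d+1}}$ is a sum of three elements of $\{w_a, w_b\}$ in $\F_2^2$ and hence lies in $\{w_a, w_b\}$, contradicting its equality with $w_k \notin \{w_a, w_b\}$. The remaining case $I \cap I' = \emptyset$ (both sets nonempty) is handled by a projection argument to $Y$: if $d+1 \notin I \cup I'$, then $\pi(V)$ and $\pi(V')$ are distinct coordinate subspaces; if exactly one of $I, I'$ contains $d+1$, checking which basis vectors $e_i$ lie in $\pi(V)$ versus $\pi(V')$ rules out equality, except in the degenerate case $I = \{d+1\}$, when $\pi(V) = \langle e_1 + e_2\rangle$ cannot equal any coordinate subspace.

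The main obstacle, and what distinguishes $q=2$ from Proposition~\ref{p:conftypeA}, is the relation $u_1 + u_2 + u_{d+1} = 0$, which allows proper subsets of the generators to satisfy unexpected identities. The invariant $V \cap Z$ cleanly encodes this interaction, and combined with the $\F_2^2$-arithmetic on $\{w_1, w_2, w_3\}$ it produces the contradiction.
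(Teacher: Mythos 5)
Your proof is correct and takes a genuinely different route from the paper's. The paper reduces the hypothesis $W := \langle x_{i,j_i} : i \in I\rangle = \langle x_{i,j'_i} : i \in I'\rangle$ to the equality $U = U'$ of the subspans indexed by $J := I \cap \{1,2,d+1\}$ and $J' := I' \cap \{1,2,d+1\}$, and then derives a contradiction in each of the four cases $\dim U \in \{0,1,2,3\}$ by counting vectors in $U$ or comparing coordinates. You instead isolate the one $\F_2$-linear relation among the $u_i$, namely $u_1 + u_2 + u_{d+1} = 0$ (precisely the feature that distinguishes $q = 2$ from $q \geq 3$), and encode it via the invariant $V \cap Z$ under the splitting $\F_2^{d+2} = Y \oplus Z$: this intersection is trivial unless $A := \{1,2,d+1\} \subseteq I$, in which case it is spanned by $w_{j_1} + w_{j_2} + w_{j_{d+1}}$. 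When $I \cap I' \neq \emptyset$, the difference trick $x_{i^*,j_{i^*}} + x_{i^*,j'_{i^*}} = w_{j_{i^*}} + w_{j'_{i^*}}$ forces $A \subseteq I, I'$ and pins down $V\cap Z$ from three directions at once, leading to a clean contradiction in the $\F_2^2$-arithmetic of $\{w_1,w_2,w_3\}$; the disjoint case is finished by projecting to $Y$. Your approach is more conceptual and shorter on the main case; the price is a separate (easy) treatment of $I \cap I' = \emptyset$, which the paper's $\dim U$ case split absorbs implicitly. One small remark: your phrase ``except in the degenerate case $I = \{d+1\}$'' suggests the basis-vector comparison fails there, but it actually works uniformly --- in that subcase $\{i : e_i \in \pi(V)\} = \emptyset$ while $\{i : e_i \in \pi(V')\} = I' \neq \emptyset$ --- so no special-casing is needed.
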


\begin{proof}
Again, the configuration is extendable by (P1).

For contradiction to (P3), assume that there are 
nonempty sets $I, I' \subseteq [d+1]$ and indices $j_i \in [3]$ for each $i \in I$ and $j'_i \in [3]$ for each $i \in I'$ satisfying $j_i \neq j'_i$ for every $i \in I \cap I'$ such that 
$$
    W :=
      \langle u_i + w_{j_i} : i \in I \rangle =
      \langle u_i + w_{j'_i} : i \in I' \rangle.
$$
We also set $J = I \cap \{ 1,2,d+1 \}$ and $J' = I' \cap \{ 1,2,d+1 \}$ as well as
$$
U = \langle x_{i,j_i} : i \in J \rangle \subseteq W
\quad \text{and} \quad
U' = \langle x_{i,j'_i} : i \in J' \rangle \subseteq W.
$$

We claim that $U = U'$. Let $x \in U \subseteq W$, then $x$ is expressible as a linear combination of the vectors $x_{i,j'_i}$ where $i \in I'$. However, no vector $x_{k,j'_k}$ can appear in this linear combination for $k \in I'
\setminus \{1,2,d+1\}$ since it would be the only vector with nonzero coefficient
at $e_k$ in the linear combination. Hence $x \in U'$ and thus $U \subseteq U'$. Similarly, we have $U' \subseteq U$ and thus $U = U'$ as claimed. Now, we need to distinguish several cases according to the dimension of $U$ and we will see that each case leads to a contradiction.

\emph{Case 1: $\dim U = 0$.} Then $J = J' = \emptyset$. If $k \in I$, then $x_{k,j_k} = u_k + w_{j_k} \in W$ and therefore $W$ contains a vector with a nonzero coefficient at $e_k$. This implies that $k \in I'$ as $x_{k,j'_k} = u_k + w_{j'_k}$ is the only possible vector $x_{i,j'_i}$ for $i \in I'$ with a nonzero coefficient at $e_k$. Similarly, if $k
\in I'$ then $k \in I$, and therefore $I = I'$. Consequently, for any
$k \in I$, we have
$$
(u_k + w_{j_k}) + (u_k + w_{j'_k}) = w_{j_k} + w_{j'_k} \in W.
$$
(Recall that we work over $\F_2$.)
However, the vector $w_{j_k} + w_{j'_k}$ cannot belong to $W$: It is a nonzero vector since $j_k \neq j'_k$ by our assumption. Also, it has zero coefficient at every $e_k$ with $k \in [d]$. This means that it cannot be expressed as a non-trivial linear combination of vectors $u_i + w_{j_i}$ for $i \in I$. A contradiction.

\emph{Case 2: $\dim U = 1$.} Then $|J| = |J'| = 1$ and $U$ and $U'$
are generated by linearly independent vectors which contradicts $U = U'$.

\emph{Case 3: $\dim U = 2$.} Then both $|J|$ and $|J'|$ contain at least two different elements. 
Therefore the set 
$$
  \Big\{ u_i + w_{j_i} : i \in J \Big\}
  \cup
  \left\{ u_i + w_{j'_i} : i \in J' \right\} \subseteq U
$$
contains at least four distinct non-zero vectors. However, the two-dimensional subspace $U$ of $\F_2^{d+2}$ can contain at most three non-zero vectors, a contradiction.

\emph{Case 4: $\dim U = 3$.} Then $J = J' = \{1,2, d+1\}$. In particular, each of
the following six different vectors belong to $U$:
$$
e_1 + w_{j_1}, \quad e_2 + w_{j_2}, \quad e_1 + e_2 + w_{j_{d+1}},
$$
$$
e_1 + w_{j'_1}, \quad e_2 + w_{j'_2}, \quad e_1 + e_2 + w_{j'_{d+1}}.
$$ 
Since there are only three possible vectors $w$, and since $j_1 \neq
j'_1; j_2 \neq j'_2$, there are $k_1 \in \{j_1,j'_i\}$ and $k_2 \in \{j_2,j'_2\}$ such that
$
w_{k_1} =  w_{k_2}.
$
Then the vector
$$
e_1 + w_{k_1} +  e_2 + w_{k_2} = e_1 + e_2
$$
belongs to $U$.
Similarly, we derive that $e_1$ and $e_2$ belong to $U$. Now, we have found
nine different vectors belonging to $U$ which contradicts that $\dim U = 3$. \qedhere
\end{proof}

\begin{proof}[Proof of Theorem \ref{t:typeA}]
By Proposition \ref{p:conftypeA} and Proposition \ref{p:conftypeA2},
$\F_q^{d+2}$ contains an independent or a weakly independent vector configuration for any $q \geq 2$ and $d \geq 2$, corresponding to an independent or weakly independent atom configuration in $\LL(\F_q^{d+2})$. The statement then follows from Proposition \ref{p:Xnonemb}.
\end{proof}

\section{Order complexes of thick geometric lattices}
\label{s:geolatt}

This section generalizes the results of the last section for general geometric lattices. In order to ensure non-embeddability, we need to demand that the geometric lattices satisfy a thickness condition.

We say that a poset $P$ is {\em thick} if every open interval of length two contains at least three elements. Equivalently, the intervals in Figure~\ref{f:nonthick} may not appear in $P$. We note that the first interval in Figure~\ref{f:nonthick}, the length two chain, cannot appear in any geometric lattice.

\begin{figure}
\begin{center}
\includegraphics{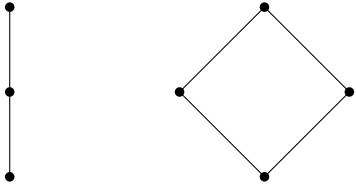}
\end{center}
\caption{Length two intervals which cannot occur in a thick lattice.}
\label{f:nonthick}
\end{figure}

\begin{theorem}
\label{t:geometric}
If $L$ is a finite thick geometric lattice of rank $d+2$
then the ($d$-dimensional) reduced order complex $\Delta(\bar{L})$ does not embed into $\R^{2d}$.
\end{theorem}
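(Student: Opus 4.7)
The plan is to apply Proposition~\ref{p:Xnonemb}: it suffices to produce a weakly independent atom configuration $\{x_{i,j} : i \in [d+1],\, j \in [3]\} \subseteq \A(L)$. The construction will parallel the two configurations from Section~\ref{s:projspace}, with $\F_q$-linear algebra replaced throughout by the matroid structure of the geometric lattice $L$ (iterated Steinitz exchange and atom-counting inside small-rank intervals, using thickness whenever a fresh atom is required).

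Since $L$ is atomistic of rank $d+2$, I would start by picking a basis of atoms $e_1,\dots,e_{d+2}$ of $L$ and, for each $i \in [d+1]$, setting $\ell_i := e_i \vee e_{d+2}$; thickness guarantees that each $\ell_i$ contains at least three atoms. The \emph{generic case} is that every $\ell_i$ contains at least four atoms, paralleling Proposition~\ref{p:conftypeA}. Here I would pick on each $\ell_i$ three distinct atoms $x_{i,1},x_{i,2},x_{i,3}$, all different from $e_{d+2}$. Iterated Steinitz exchange then implies that $\{x_{1,j_1},\dots,x_{d+1,j_{d+1}},e_{d+2}\}$ remains a basis of $L$ for every choice of $j_1,\dots,j_{d+1}$, so the flat $F := \bigvee_k x_{k,j_k}$ has rank $d+1$ and does not contain $e_{d+2}$. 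If some $x_{i,j}$ with $j \ne j_i$ were below $F$, then $F$ would contain two distinct atoms on $\ell_i$, hence the whole line $\ell_i$, and thus $e_{d+2}$, a contradiction. This shows the configuration is independent, and Proposition~\ref{p:Xnonemb} closes the case.

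The \emph{exceptional case} arises when some $\ell_i$ has only three atoms (so $e_{d+2}$ is unavoidable among the atoms on $\ell_i$) and is modelled on Proposition~\ref{p:conftypeA2}. I would pick three distinct atoms $w_1,w_2,w_3$ on $e_{d+1} \vee e_{d+2}$ and an atom $u$ on $e_1 \vee e_2$ with $u \ne e_1,e_2$ (both possible by thickness), set $u_k := e_k$ for $k \in [d]$ and $u_{d+1} := u$, and let $x_{i,j}$ be an atom on the line $u_i \vee w_j$ distinct from $u_i$ and $w_j$ (which again exists by thickness). Weak independence is then verified via a case split on the rank of a putative common flat $\bigvee_{i \in I} x_{i,j_i} = \bigvee_{i \in I'} x_{i,j'_i}$, in direct analogy with the four ``$\dim U$'' cases in the proof of Proposition~\ref{p:conftypeA2}. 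This exceptional case is the main obstacle: the $\F_2$-arithmetic identities driving Proposition~\ref{p:conftypeA2} have to be reformulated throughout as matroidal statements—ranks, closures and counts of atoms on specific lines and planes of $L$—each time using thickness to produce the third atom on a line or the third line through a point. Once weak independence is established in both cases, Proposition~\ref{p:Xnonemb} delivers non-embeddability of $\Delta(\bar L)$ into $\R^{2d}$, proving the theorem.
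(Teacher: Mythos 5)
Your strategy diverges genuinely from the paper's. The paper splits on modularity: for modular $L$, it invokes Birkhoff's classification (Lemma~\ref{lem:product}) to write $L$ as a product of projective-space lattices (thickness kills the Boolean factor) and then imports the configurations from Section~\ref{s:projspace} factorwise; for non-modular $L$, it uses Lemma~\ref{lem:nonmod} to find a hyperplane $h$ and a line $\ell$ with $h \wedge \ell = \hat 0$, pulls a weakly independent configuration out of $[\hat 0, h]$ by induction on rank, and appends three atoms of $\ell$. Your ``generic case'' (every $\ell_i = e_i \vee e_{d+2}$ carries at least four atoms) is in fact correct as written: the iterated Steinitz exchange argument, together with semimodularity, does show that $\{x_{1,j_1},\ldots,x_{d+1,j_{d+1}},e_{d+2}\}$ stays a basis, so condition~\eqref{equ:indconf} holds and Proposition~\ref{p:Xnonemb} applies. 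This is a self-contained argument the paper does not make.

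The exceptional case, however, is a genuine gap, and you correctly flag it as the main obstacle. The verification of weak independence in Proposition~\ref{p:conftypeA2} is not merely $\F_2$-\emph{flavoured}; it is load-bearingly $\F_2$-\emph{arithmetic}. Cases 3 and 4 of that proof count nonzero vectors in rank-$2$ and rank-$3$ $\F_2$-subspaces (exactly $3$, resp.\ $7$), and they repeatedly exploit identities such as $(u+w_j)+(u+w_{j'})=w_j+w_{j'}$. In a general thick geometric lattice there is no addition of atoms, rank-$2$ flats may carry arbitrarily many atoms, and ``the third atom'' on $u_i \vee w_j$ is neither canonical nor unique (thickness only gives $\geq 3$). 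Moreover your case split is tied to the chosen basis and apex $e_{d+2}$, so when only some of the $\ell_i$ are short you are in a hybrid situation that neither branch of your argument covers, and you cannot in general re-choose the basis to escape this (in $\LL(\F_2^{d+2})$ every line is short). The paper's modular/non-modular dichotomy is precisely what makes the $\F_2$-counting legal: Birkhoff's theorem guarantees that the only place it is needed is inside an honest $\F_2$-projective space, and the non-modular residue is dispatched by a hyperplane induction that avoids arithmetic entirely. Without a comparable structural input your exceptional case does not close.
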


\begin{remark}
There is a property for lattices called {\em relatively complemented} which we do not define here.
In \cite[Theorem 2]{Bjoerner1981}, it is shown that for a lattice of finite length, being relatively complemented is equivalent to the absence of $3$-element intervals. Thus, every thick lattice of finite length is automatically relatively complemented.
Furthermore, a semimodular lattice is relatively complemented if and only if it is atomistic \cite[Theorem 6]{Birkhoff1995}. This implies that in fact, any thick semimodular lattice is atomistic and thus geometric. So in Theorem \ref{t:geometric} we could demand that $L$ is semimodular instead. This would not make the result any more general though.
\end{remark}

Modular geometric lattices have a very special structure as is revealed by the following statement:

\begin{lemma}[{\cite[Theorem 10]{Birkhoff1995}}]
\label{lem:product}
Any modular geometric lattice is a product of a Boolean algebra with projective geometries.
\end{lemma}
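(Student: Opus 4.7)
The plan is to prove this structure theorem in two stages: first decompose an arbitrary geometric lattice as a direct product of irreducible geometric lattices while preserving modularity, then classify the irreducible modular geometric lattices and observe that the only possibilities are the two-element Boolean algebra and projective geometries.

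For the decomposition, I would exploit the fact that $\A(L)$ carries the structure of a simple matroid, with a unique partition into connected components $C_1, \dots, C_k$. Setting $u_i := \bigvee C_i$, the candidate isomorphism is
\[
    \varphi \colon L \longrightarrow [\hat{0}, u_1] \times \cdots \times [\hat{0}, u_k], \qquad x \mapsto (x \wedge u_1, \dots, x \wedge u_k),
\]
with inverse $(y_1, \dots, y_k) \mapsto y_1 \vee \cdots \vee y_k$. Atomisticity forces surjectivity, while the modular rank identity $\rk(a) + \rk(b) = \rk(a \vee b) + \rk(a \wedge b)$ ensures that $\varphi$ respects both $\wedge$ and $\vee$. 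Modularity is clearly inherited by each interval $[\hat{0}, u_i]$, so the problem reduces to classifying irreducible modular geometric lattices.

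For the classification, an irreducible modular geometric lattice of rank $1$ is just the two-element Boolean algebra $\mathbf{2}$. For rank $\geq 2$, my plan is to argue that $L$ is the lattice of flats of a projective geometry via three steps: first, show that irreducibility forces every rank-$2$ flat to contain at least three atoms, since otherwise a ``short line'' $\{a,b\}$ lets us peel off a Boolean $\mathbf{2}\times\mathbf{2}$ factor; second, verify that the incidence structure of atoms and lines satisfies the Veblen--Young axioms, where the Pasch-type incidence axiom drops out of the modular identity applied to two coplanar lines; and finally invoke the coordinatization theorem.

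The hard part will be this last step, namely identifying the resulting abstract projective incidence geometry with the lattice of subspaces of a vector space over a division ring in rank $\geq 4$. This is the content of the classical Veblen--Young theorem and requires constructing a coordinate ring via harmonic conjugates plus verifying Desargues from the modular law. In rank $3$ one accepts an abstract (possibly non-Desarguesian) projective plane as a legitimate projective geometry, and rank $2$ gives a single projective line. Since the present paper only needs the product decomposition itself, following Birkhoff and citing the final identification at this last step is by far the cleanest route.
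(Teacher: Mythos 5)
The paper does not prove this lemma at all: it is stated as a black box and attributed to Birkhoff (\cite[Theorem 10]{Birkhoff1995}), so there is no ``paper's own proof'' to compare against. Your sketch is a reasonable reconstruction of the classical argument and follows essentially the same route Birkhoff (and most modern treatments) take: decompose the geometric lattice as a direct product over the connected components of the underlying simple matroid, observe that modularity passes to each factor, and then classify irreducible modular geometric lattices as $\mathbf{2}$ or projective geometries via the thick-line property and Veblen--Young coordinatization.

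One small imprecision worth flagging in the irreducibility step: a $2$-point line $\{a,b\}$ does not immediately ``peel off a $\mathbf{2}\times\mathbf{2}$ factor''; what one actually shows is that $a$ and $b$ lie in distinct matroid components, so a copy of $\mathbf{2}$ (not $\mathbf{2}\times\mathbf{2}$) splits off. The interval $[\hat{0}, a\vee b]$ is indeed isomorphic to $\mathbf{2}\times\mathbf{2}$, but being a Boolean interval is weaker than being a direct factor. The modular law is what makes the separation work: one uses it to rule out any circuit through both $a$ and $b$, since such a circuit $\{a,b,c_1,\dots,c_k\}$ would, by the rank identity applied to $a\vee b$ and $c_1\vee\dots\vee c_k$, force an atom of the line $a\vee b$ below $c_1\vee\dots\vee c_k$, contradicting the circuit's minimal dependence. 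This is genuinely special to the modular case---the cycle matroid of $K_4$ is a connected geometric lattice with $2$-point lines---so it is worth making the role of modularity explicit there rather than only in the product-isomorphism step, where it is in fact not needed (the component decomposition works for all geometric lattices). Also note that the rank-$1$ irreducible factors aggregate to give the Boolean algebra in the statement; that bookkeeping should be said out loud. With those clarifications your outline is sound, and deferring to Veblen--Young for the final coordinatization is exactly what Birkhoff does.
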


In a graded lattice, an element of rank $2$ is a {\em line} and an
element of corank $1$ is a {\em hyperplane}. It will prove useful for us that modular lattices can be characterized by a relation between lines and hyperplanes.

\begin{lemma}[{\cite[Lemma 23.8]{vanLintWilson2001}}]
\label{lem:nonmod}
A finite geometric lattice $L$ is modular if and only if every line and every hyperplane meet non-trivially.
\end{lemma}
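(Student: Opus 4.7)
Set $n = \rk(\hat{1}) = d+2$.

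The ``only if'' direction is a direct rank computation. Given a line $\ell$ and a hyperplane $H$, modularity gives
\[
    \rk(\ell \wedge H) + \rk(\ell \vee H) = \rk(\ell) + \rk(H) = n + 1.
\]
Since $\rk(\ell \vee H) \le n$, we conclude $\rk(\ell \wedge H) \ge 1$, i.e., $\ell \wedge H \neq \hat{0}$.

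The ``if'' direction is the substantive one. Semimodularity already gives $\rk(a) + \rk(b) \ge \rk(a \wedge b) + \rk(a \vee b)$ for every pair, so the task is to promote this inequality to equality starting only from the hypothesis that equality holds when $a$ is a line and $b$ is a hyperplane. I would do this in two stages.

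First, by induction on $\rk(a)$, I would show that every pair $(a, H)$ with $H$ a hyperplane is modular. The cases $\rk(a) \le 1$ are immediate, and $\rk(a) = 2$ is exactly the hypothesis. For $\rk(a) \ge 3$, pick an atom $p \le a$ with $p \not\le a'$ for some subelement $a'$ of rank $\rk(a)-1$ (so $a = a' \vee p$). Apply the inductive hypothesis to $(a', H)$ and use the standard geometric-lattice identity that adjoining an atom $q$ to $x$ raises $\rk(x)$ by exactly $1$ when $q \not\le x$ (and by $0$ otherwise) to track how the meet and join with $H$ change when we adjoin $p$ to $a'$. Second, bootstrap from the hyperplane case to arbitrary $b$: write $b = H_1 \wedge \cdots \wedge H_k$ as a meet of the hyperplanes containing it (always possible in a geometric lattice) and induct on $k = n - \rk(b)$, combining modularity of $(a, H_1 \wedge \cdots \wedge H_{k-1})$ with the hyperplane case applied to a suitable element.

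The main obstacle is the inductive step of the first stage: showing that if $(a', H)$ is modular then so is $(a' \vee p, H)$. The delicate case is $p \not\le H$ and $p \not\le a'$, where one must verify $\rk((a' \vee p) \wedge H) = \rk(a' \wedge H) + 1$. This is where the line--hyperplane hypothesis actually enters: given an atom $q'$ below $(a' \vee p) \wedge H$ but not below $a'$, one analyses the line $p \vee q'$ via the hypothesis to produce the missing atom in the meet, essentially a Veblen--Young style exchange argument characteristic of matroid modularity proofs. The bookkeeping of these atoms and verifying they are distinct from those already accounted for in $a' \wedge H$ is the step that requires care.
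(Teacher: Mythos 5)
The paper does not prove this statement: it is quoted from van Lint and Wilson (their Lemma 23.8) and used as a black box. So there is no "paper's proof" to compare against; I will assess the sketch on its own.

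The ``only if'' direction of your sketch is correct and is the standard one-line rank computation.

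For the ``if'' direction, Stage 1 (fix a hyperplane $H$ and induct on $\rk(a)$) is a sound plan, but your description of the crucial step is circular. You write: ``given an atom $q'$ below $(a'\vee p)\wedge H$ but not below $a'$, one analyses the line $p\vee q'$ \dots to produce the missing atom in the meet.'' But an atom $q'\leq(a'\vee p)\wedge H$ with $q'\not\leq a'$ \emph{is} the missing atom; if you already had it, nothing would remain to prove. The correct move runs in the opposite direction: in the genuinely delicate sub-case, which is $p\not\leq H$ \emph{and} $a'\not\leq H$ (not merely $p\not\leq H$, $p\not\leq a'$ --- when $p\not\leq H$ but $a'\leq H$ one checks $a\wedge H=a'$ directly and the hypothesis is not needed), pick an atom $q\leq a'$ with $q\not\leq H$, form the line $\ell=p\vee q$, invoke the hypothesis to get an atom $r\leq \ell\wedge H$, and then verify $r\leq a\wedge H$ and $r\not\leq a'$ (the latter because $r\leq a'$ together with $q\leq a'$ would force $\ell=q\vee r\leq a'$, hence $p\leq a'$). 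So the exchange idea is the right one, but the roles of the known atom and the sought atom are swapped in your write-up.

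Stage 2 is where the real gap is. You assert that one can ``combine modularity of $(a,H_1\wedge\cdots\wedge H_{k-1})$ with the hyperplane case applied to a suitable element,'' but this does not follow by routine rank bookkeeping. If you chase the arithmetic you are left needing an identity of the form
\begin{equation*}
\rk(a\vee b')+\rk\bigl((a\wedge b')\vee H\bigr)=n+\rk\bigl(a\vee(b'\wedge H)\bigr),
\end{equation*}
which is not free: it is, in disguise, the statement that the meet of two modular flats in a geometric lattice is again modular. That statement is true (it goes back to Stanley's \emph{Modular elements of geometric lattices}), and combined with the coatomicity of geometric lattices it does finish the proof --- but it is a theorem with its own multi-case argument, not something the sketch can take for granted. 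As written, Stage 2 is a missing lemma rather than a completed step. An alternative that avoids the meet-of-modular-flats theorem altogether is to induct on $\rk(L)$: first check that the line--hyperplane hypothesis is inherited by every proper interval $[\hat 0,c]$ and $[x,\hat 1]$ (both checks are short exchange arguments of the same flavour as Stage 1), which reduces the problem to complementary pairs $a\wedge b=\hat 0$, $a\vee b=\hat 1$ with $\rk(a)+\rk(b)=n+1$, and from such a pair one extracts a disjoint line and hyperplane directly by passing to a corank-two subflat of $a$.

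In short: the overall two-stage architecture can be made to work, the ``only if'' direction and the skeleton of Stage 1 are fine, but the Stage 1 exchange step is stated backwards and the Stage 2 bootstrap conceals a non-trivial lemma that the sketch neither proves nor cites.
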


We show Theorem~\ref{t:geometric} by induction on the rank of $L$. We need to
distinguish two cases concerning the modularity of $L$. For both cases, we
assume that Theorem~\ref{t:geometric} is already shown for lower-rank lattices.

\begin{proposition}
\label{p:modular}
If $L$ is a thick modular geometric lattice of rank $d+2$
then the reduced order complex $\Delta(\bar{L})$ does not embed into $\R^{2d}$.
\end{proposition}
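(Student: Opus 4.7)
The plan is to apply the product decomposition from Lemma~\ref{lem:product} and use the thickness hypothesis to rule out all but a single projective-geometry factor, then invoke Theorem~\ref{t:typeA}.

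First I would write $L \cong B \times G_1 \times \cdots \times G_k$ by Lemma~\ref{lem:product}, where $B$ is a Boolean algebra and each $G_i$ is the subspace lattice of a projective space. The key combinatorial observation is that in any product $L' \times L''$ of two bounded lattices each having at least one atom, the interval $[(\hat{0},\hat{0}),(a,b)]$ spanned by atoms $a \in L'$ and $b \in L''$ has length two and contains exactly the two middle elements $(a,\hat{0})$ and $(\hat{0},b)$; this is precisely the forbidden four-element diamond of Figure~\ref{f:nonthick}. Hence if $L$ were a product of two or more factors of rank $\geq 1$, it would not be thick. The same observation applied to two distinct atoms of $B$ shows that a Boolean algebra of rank $\geq 2$ is never thick. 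Since $L$ has rank $d+2 \geq 3$, the factor $B$ cannot be all of $L$ and must therefore be trivial, while at most one $G_i$ can be nontrivial. Consequently $L$ is isomorphic to the subspace lattice of a single projective geometry of rank $d+2$.

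Next, for $d \geq 2$, the rank is at least $4$, so by Birkhoff's coordinatization theorem this projective geometry is $\LL(\F_q^{d+2})$ for some prime power $q$, and Theorem~\ref{t:typeA} yields that $\Delta(\bar{L})$ does not embed into $\R^{2d}$. For $d = 1$, $L$ is a (possibly non-Desarguesian) finite projective plane, and $\Delta(\bar{L})$ is the point-line incidence bipartite graph. The thickness assumption guarantees at least three points on every line and at least three lines through every point, and the graph has girth $\geq 6$; one can then exhibit a subdivision of $K_{3,3}$ inside it, proving non-planarity. This is the argument referenced in the proof of Theorem~\ref{t:buildings}(i).

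The main obstacle is the thickness-to-irreducibility reduction in the first step: one must carefully verify that no non-trivial product decomposition can coexist with the thickness condition, handling both mixed Boolean/projective factors and purely Boolean factors. Once that structural dichotomy is in place, the rest of the proof is essentially a citation of Theorem~\ref{t:typeA} together with the classical non-planarity argument for projective planes in the low-dimensional case.
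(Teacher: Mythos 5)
Your proof is correct, but it takes a genuinely different route from the paper's. The paper's proof of Proposition~\ref{p:modular} keeps the product decomposition $L = L_1 \times \cdots \times L_m$ from Lemma~\ref{lem:product} and directly builds a weakly independent atom configuration in the product by stacking the configurations from Propositions~\ref{p:conftypeA} and~\ref{p:conftypeA2}, each factor contributing atoms padded with $\hat{0}$'s in the other coordinates, and then appeals to Proposition~\ref{p:Xnonemb}. You instead observe that thickness is much more restrictive than the paper makes explicit: any product $L' \times L''$ of two bounded lattices both having atoms contains the forbidden four-element diamond $[(\hat{0},\hat{0}),(a,b)]$, so not only the Boolean factor but all but one projective-geometry factor must be trivial. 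This collapses $L$ to a single finite projective geometry of rank $d+2$, after which you invoke coordinatization (Veblen--Young for rank $\geq 4$, plus Wedderburn for finite division rings) to land on $\LL(\F_q^{d+2})$ and cite Theorem~\ref{t:typeA}, treating the rank-$3$ case $d=1$ separately via the girth-$6$/min-degree-$3$ Euler-formula argument. Your route is arguably cleaner and even tidies up a small implicit issue in the paper's proof: there the index set for the stacked configuration is written as $i \in [\rk(L_k)]$ (rather than $[\rk(L_k)-1]$), and a count of rows shows the stacked configuration only has the correct $d+1$ rows when $m=1$ anyway, so the construction is really operating in the single-factor case your observation makes explicit. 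The trade-offs are that your argument imports a nontrivial external result (coordinatization of thick projective geometries) that the paper otherwise avoids, and forces the case split $d=1$ versus $d\geq 2$ since non-Desarguesian planes elude coordinatization; the paper's self-contained configuration-building stays within the methodology of Section~\ref{s:vecconf} and avoids both.
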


\begin{proof}
Following Lemma \ref{lem:product}, we see that $L$ is the product of a Boolean lattice and projective geometries. However, the Boolean lattice is not thick, so this must be a trivial factor. Thus, $L$ is of the form
\[
    L = L_1 \times \cdots \times L_m
\]
where each $L_k$ is the subspace lattice of a projective geometry. By Proposition \ref{p:conftypeA} and Proposition \ref{p:conftypeA2}, each $L_k$ contains a weakly injective atom configuration
\[
  \left\{ a^k_{i,j} : i \in [\rk(L_k)], j \in [3] \right\}.
\]
We claim that the union of those configurations forms a weakly independent atom configuration in $L$. Consider the atom configuration
\[
  \left\{ (a^1_{i,j},\hat{0},\ldots,\hat{0}) \right\}
  \cup
  \left\{ (\hat{0},a^2_{i,j},\hat{0},\ldots,\hat{0}) \right\}
  \cup
  \ldots
  \cup
  \left\{ (\hat{0},\ldots,\hat{0},a^m_{i,j}) \right\}
\]
where in each set, the elements $a^k_{i,j}$ are for $i \in [\rk(L_k)]$ and $j \in [3]$.
Using Lemma~\ref{l:char_wimap}, it is not hard to check that this actually is a weakly independent atom
configuration. By Proposition \ref{p:Xnonemb}, $\Delta(\bar{L})$ cannot be embedded into $\R^{2d}$.
\end{proof}

Next, we consider the non-modular case. Note that a non-modular geometric lattice must have rank at least $3$.

\begin{proposition}
\label{p:nonmodular}
If $L$ is a thick non-modular geometric lattice of rank $d+2$
then the reduced order complex $\Delta(\bar{L})$ does not embed into $\R^{2d}$.
\end{proposition}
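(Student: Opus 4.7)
The plan is to prove, by induction on the rank of $L$, the slightly stronger statement that every thick geometric lattice of rank $d+2$ contains a weakly independent atom configuration $\{x_{i,j} : i \in [d+1],\, j \in [3]\}$; non-embeddability of $\Delta(\bar{L})$ in $\R^{2d}$ will then follow from Proposition~\ref{p:Xnonemb}. The base case $d=0$ is immediate, as any three distinct atoms trivially form such a configuration with one factor. For the inductive step in the non-modular case, I would invoke Lemma~\ref{lem:nonmod} to choose a line $\ell$ and a hyperplane $H$ of $L$ with $\ell \wedge H = \hat{0}$, and use thickness of $L$ to pick three distinct atoms $y_1, y_2, y_3$ below $\ell$.

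The interval $[\hat{0}, H]$ is itself a thick geometric lattice of rank $d+1$ (thickness is inherited, since every length-two open interval in $[\hat{0}, H]$ is also one in $L$, and intervals in geometric lattices are geometric). Applying the inductive hypothesis to $[\hat{0}, H]$ yields a weakly independent atom configuration $\{a_{i,j} : i \in [d],\, j \in [3]\}$ there, and I propose as candidate configuration in $L$
\[
    x_{i,j} := a_{i,j} \text{ for } i \in [d], \qquad x_{d+1,j} := y_j.
\]
Extendability is a routine semimodularity check: sub-joins not involving any $y_j$ are already extendable inside $[\hat{0}, H]$, while any sub-join involving some $y_j$ has rank at most $d+1 < \rk(L)$ and so is not $\hat{1}_L$.

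The main task is verifying the weak independence criterion of Lemma~\ref{l:char_wimap}, for which I would split on whether $d+1$ belongs to $I$ and $I'$. If $d+1$ lies in neither, both joins live in $[\hat{0}, H]$ and are distinct by the inductive hypothesis; if $d+1$ lies in exactly one of them, one join sits below $H$ and the other does not, since $y_j \not\leq H$ by $\ell \wedge H = \hat{0}$. The crucial case is $d+1 \in I \cap I'$. Here I write the two joins as $b = a \vee y_{j_{d+1}}$ and $b' = a' \vee y_{j'_{d+1}}$ with $a, a' \leq H$, and assume $b = b'$. The key observation is that $y_{j_{d+1}}$ is an atom not below $H$ and hence not below $a$, so semimodularity forces $\rk(b) = \rk(a) + 1$. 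Combined with $a \leq b \wedge H \lneq b$, this pins down $a = b \wedge H$, and symmetrically $a' = b' \wedge H = b \wedge H$, so $a = a'$. Depending on whether $I \setminus \{d+1\}$ and $I' \setminus \{d+1\}$ are empty, this equality either contradicts inductive weak independence in $[\hat{0}, H]$, is ruled out by a rank comparison, or forces $y_{j_{d+1}} = y_{j'_{d+1}}$, contradicting $j_{d+1} \neq j'_{d+1}$.

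The hard part is precisely this meet-with-$H$ argument in the crucial case; it relies essentially on non-modularity (to secure $\ell \wedge H = \hat{0}$) and on semimodularity (to force the rank jump $\rk(a \vee y_{j_{d+1}}) = \rk(a) + 1$). Once the weakly independent atom configuration in $L$ is in hand, Proposition~\ref{p:Xnonemb} immediately delivers non-embeddability of $\Delta(\bar{L})$ into $\R^{2d}$.
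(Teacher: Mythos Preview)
Your proposal is correct and follows essentially the same inductive strategy as the paper: use Lemma~\ref{lem:nonmod} to find a line $\ell$ and hyperplane $H$ meeting trivially, pull a weakly independent configuration from $[\hat{0},H]$ by induction, adjoin three atoms below $\ell$, and verify weak independence. The paper reaches $a=a'$ via the join $x\vee x'$ (squeezed between $x$ and the covering element $z$), whereas you reach it via the meet $b\wedge H$; these are dual tricks of the same flavour, and your case analysis on the emptiness of $I\setminus\{d+1\}$ and $I'\setminus\{d+1\}$ is in fact more careful than the paper's, which tacitly assumes both are nonempty when invoking the inductive hypothesis.
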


\begin{proof}
By Lemma \ref{lem:nonmod}, we find a hyperplane $h$ and a line $\ell$ in $L$ such that $h \wedge \ell = \hat{0}$.

The interval $L' = [\hat{0},h]$ is a finite thick geometric lattice of smaller rank. By induction, we find a weakly independent atom configuration
\[
    \left\{ a_{i,j} : i \in [d], j \in [3] \right\}
\]
in $L'$.

Because $L$ is thick, the line $\ell$ contains at least three points and we can choose three different atoms $a_{d+1,1},a_{d+1,2},a_{d+1,3}$ covered by $\ell$.
It remains to show that
\[
    \{ a_{i,j} \}_{i \in [d+1]; j \in [3]}
\]
is a weakly independent atom configuration in $L$. For that, choose $I, I' \subseteq [d+1]$ and $j_i \in [3]$ for each $i \in I$ as well as $j'_{i'} \in [3]$ for each $i' \in I'$ such that $j_i \neq j'_i$ if $i \in I \cap I'$. Assume that
\[
    z := \bigvee_{i \in I} a_{i,j_i} = \bigvee_{i' \in I'} a_{i',j'_{i'}}
\]
and set
\[
    x := \bigvee_{i \in I \setminus \{d+1\}} a_{i,j_i} \quad \text{and} \quad
    x' := \bigvee_{i \in I' \setminus \{d+1\}} a_{i,j'_i}
\]
If $d+1 \not\in I$, then $x = x' \vee a_{d+1,j'_{d+1}}$. However, $x \leq h$ and then $a_{d+1,j'_{d+1}} \leq h$ in contradiction to $h \wedge \ell = \hat{0}$. Thus, $d+1 \in I$. The same argument proves that $d+1 \in I'$ and we get that $d+1 \in I \cap I'$.

We see that $x \leq z$ and $x' \leq z$, so $x \leq x' \vee x \leq z$. However, $z = x \vee a_{r,j_r}$ and
thus $z$ must cover $x$
(by semimodularity, taking the join with an atom can increase the rank by at most one).
However, $z \neq x \vee x'$ because otherwise $a_{d+1,j_{d+1}} \leq z = x \vee x' \leq h$ in contradiction to $h \cap \ell = \hat{0}$. Thus, $x \vee x' = x$ and by the same argument $x \vee x' = x'$. This is a contradiction because the configuration in $L'$ is weakly independent.
\end{proof}

Theorem \ref{t:geometric} follows from Proposition \ref{p:modular} and Proposition \ref{p:nonmodular}.

\section{Order complexes of some finite buildings}
\label{s:build}

In this section, we show that several classes of $d$-dimensional finite buildings cannot be embedded into $\R^{2d}$. This includes finite buildings that are one-dimensional or of type A as well as two classes of finite buildings of type B.

\begin{theorem}
\label{t:buildings}
A $d$-dimensional thick
building $\Delta$ does not embed into $\R^{2d}$ if any of the following
conditions is satisfied
\begin{enumerate}
\item $d = 1$,
\item $\Delta$ is of type $A$,
\item $\Delta$ is of type $B$ coming from an
alternating bilinear form on $\F_q^{2d+2}$, or
\item $\Delta$ is of type $B$ coming from an
Hermitian form on $\F^{2d + 2}_{q^2}$ or $\F^{2d + 3}_{q^2}$.
\end{enumerate}
\end{theorem}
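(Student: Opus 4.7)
The plan is to exhibit, in each of cases (ii)--(iv), a weakly independent atom configuration of size $3(d+1)$ in the poset whose reduced order complex is the building, and then invoke Proposition~\ref{p:Xnonemb}; case~(i) will be handled by a direct planarity argument via Euler's formula. For~(ii), a thick building of type~$A_{d+1}$ with $d\geq 2$ is precisely $\Delta(\overline{\LL(\F_q^{d+2})})$ for some prime power $q$, so Theorem~\ref{t:typeA} applies directly; the remaining case $d=1$ is a (possibly non-Desarguesian) thick projective plane, which I absorb into case~(i).

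For case~(iii), the building is the reduced order complex of the poset of nonzero totally isotropic subspaces of $\F_q^{2d+2}$. Choosing a symplectic basis $e_1,f_1,\ldots,e_{d+1},f_{d+1}$ yields an orthogonal direct sum $\F_q^{2d+2}=H_1\perp\cdots\perp H_{d+1}$ into hyperbolic planes $H_i=\langle e_i,f_i\rangle$. Since $|\mathbb{P}^1(\F_q)|=q+1\geq 3$, I pick three distinct $1$-dimensional subspaces $x_{i,1},x_{i,2},x_{i,3}\subseteq H_i$ for each $i\in[d+1]$; all are isotropic because the alternating form vanishes identically on each $H_i$. The configuration is extendable: each join $\bigvee_{i\in I} x_{i,j_i}$ equals the internal direct sum $\bigoplus_{i\in I} x_{i,j_i}$, which is totally isotropic of dimension $|I|\leq d+1$ (pairwise orthogonality of the $H_i$ and isotropy of each summand). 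Weak independence then follows from Lemma~\ref{l:char_wimap} by a projection argument: given $I,I'\subseteq[d+1]$ with $j_i\neq j'_i$ on $I\cap I'$, if $I\neq I'$ then any $i^*$ in the symmetric difference distinguishes the two sides by their $H_{i^*}$-projection (one is nontrivial, the other is zero), and if $I=I'$ then any $i^*\in I$ with $j_{i^*}\neq j'_{i^*}$ witnesses different $H_{i^*}$-projections. Proposition~\ref{p:Xnonemb} then yields non-embeddability. Case~(iv) is formally identical: orthogonally decompose $\F_{q^2}^{2d+2}$ (or $\F_{q^2}^{2d+3}$) into $d+1$ Hermitian hyperbolic planes, with a harmless anisotropic $1$-dimensional summand orthogonal to everything in the odd-dimensional case. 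Each Hermitian hyperbolic plane carries $q+1\geq 3$ isotropic points (the Hermitian curve in $\mathbb{P}^1(\F_{q^2})$), so the construction and verification of case~(iii) transfer verbatim.

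For case~(i), a $1$-dimensional thick finite building is the Levi graph of a thick finite generalized $n$-gon; by Feit--Higman, $n\in\{2,3,4,6,8\}$. The case $n=2$ is a complete bipartite graph $K_{s+1,t+1}$ with $s,t\geq 2$, which is non-planar by Kuratowski. For $n\geq 3$, the graph is bipartite of girth exactly $2n$, with $V=(s+t+2)(st+1)$ vertices and $E=(s+1)(t+1)(st+1)$ edges for thickness parameters $s,t\geq 2$. A planar bipartite graph of girth $\geq 2n$ satisfies $E\leq \tfrac{n(V-2)}{n-1}$ by Euler's formula; the routine computation
\[
(n-1)(s+1)(t+1)(st+1) - n\bigl((s+t+2)(st+1)-2\bigr) = (st+1)\bigl((n-1)st - (s+t) - n - 1\bigr) + 2n
\]
is strictly positive whenever $s,t\geq 2$ and $n\geq 3$, so the Levi graph is non-planar. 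Alternatively, inside the atom-configuration framework one can handle projective planes ($n=3$) directly: three collinear points on a line $\ell$ together with three points off $\ell$ form a weakly independent configuration, because if two disjoint cross-pairs produced the same line, then all four points involved would lie on that line, forcing three of them onto (or off) $\ell$ contrary to the choice.

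The main obstacle I expect is precisely the $n\geq 4$ part of case~(i): in a generalized quadrangle, hexagon, or octagon not every two points are collinear, so a $K_{3,3}$-style atom configuration need not be extendable and the uniform symplectic/Hermitian construction of (iii)/(iv) has no obvious analogue. The Euler-formula detour sidesteps this cleanly, but a single atom-configuration construction uniform across all Feit--Higman values of $n$ appears to be substantially harder than the classical cases already covered.
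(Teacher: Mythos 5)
Your overall strategy matches the paper's: case (ii) is Theorem~\ref{t:typeA}; cases (iii) and (iv) come down to exhibiting an independent atom configuration in the poset of totally isotropic subspaces (built from a hyperbolic/symplectic basis and three isotropic lines in each hyperbolic plane, exactly as in Propositions~\ref{p:sympl} and~\ref{p:unitary}); and case (i) reduces to non-planarity of thick generalized $m$-gons via Euler's formula. Your projection argument for weak independence in (iii)/(iv) is a clean reformulation of the paper's check via the bilinear form, and it does go through: the $H_i$ are pairwise orthogonal, the decomposition is direct, and any two distinct isotropic lines inside a non-degenerate plane $H_i$ are non-orthogonal, so the $H_{i^*}$-projection distinguishes the two joins.

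Two slips are worth flagging. First, in (iii) you justify isotropy of all lines in $H_i$ by saying ``the alternating form vanishes identically on each $H_i$'' --- that is false ($\bil{e_i,f_i}=1$); the correct reason is that an alternating form satisfies $\bil{v,v}=0$ for \emph{every} single vector, so all one-dimensional subspaces are isotropic even though the plane $H_i$ itself is not. (In the Hermitian case you correctly count the $q+1$ isotropic points.) Second, in (i) the explicit counts $V=(s+t+2)(st+1)$ and $E=(s+1)(t+1)(st+1)$ are the formulas for a generalized \emph{quadrangle}; they are wrong for $n=3,6,8$ (e.g.\ a projective plane of order $q$ has $2(q^2+q+1)$ vertices, not $(2q+2)(q^2+1)$), so the displayed identity does not prove what you want for those $n$. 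The fix is to not compute explicit counts at all: for $m\geq 3$ the Levi graph has girth $2m\geq 6$ and (by thickness) minimum degree at least $3$, but Euler's formula forces a vertex of degree at most $2$ in any planar graph of girth at least $6$; this is the paper's argument and it needs neither Feit--Higman nor point/line counts. Your separate atom-configuration argument for projective planes ($n=3$) is correct for the ``both sides are lines'' case and the remaining cases (point vs.\ point, point vs.\ line) are immediate by rank; it matches the spirit of Example~\ref{ex:fanoplane}.
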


We begin with a short introduction to the necessary background about finite buildings. This includes a remark why we have already shown part (i) and (ii) of Theorem \ref{t:buildings}. Part (iii) and (iv) will be proved in Section \ref{sec:typeBalternating} and Section \ref{sec:typeBhermitian}, respectively.

\subsection{Finite buildings}
\label{s:buildings}

This subsection gives a very short introduction to finite buildings. For more details, the interested reader should have a look at \cite{AbramenkoBrown2008}, \cite{Ronan2009} or \cite{Tits1974}. 

A \emph{finite Coxeter complex} is a simplicial subdivision of a sphere induced by a hyperplane arrangement corresponding to a finite reflection group, see \cite[Chapter 3]{AbramenkoBrown2008} or \cite{Humphreys1990}.
A \emph{finite building} is a finite simplicial complex $\Delta$ which is glued
together as a union of Coxeter complexes $\Gamma$, called \emph{apartments}, 
following certain axioms.
These axioms are simple but imply that buildings are structures of high complexity and with large symmetry groups.

A buildings is \emph{thick} if each face of codimension one is contained in at least three maximal faces.
A building is \emph{irreducible} if it is not isomorphic to the simplicial join of two smaller buildings.

One-dimensional finite buildings are known as \emph{generalized $m$-gons}. They
are finite bipartite graphs of diameter $m$ and girth $2m$ for some $m \geq 3$
\cite[Proposition 4.44]{AbramenkoBrown2008}. This means that every
shortest path between any two vertices has length at most $m$ and that any
cycle has length at least $2m$. 

Irreducible finite buildings of dimension at least two are classified according to the types of the Weyl groups of the underlying Coxeter complexes. Our results cover finite buildings of type A and some finite buildings of type B, which can all be described very explicitly as order complexes of certain posets.


Every finite thick building $\Delta$ of type A and dimension $d
\geq 2$ is isomorphic to the order complex of the poset of proper subspaces of a $(d+2)$-dimensional vector space over a finite field $\F_q$ (see \cite{Tits1974}). We discuss embeddability of those complexes in Section \ref{s:projspace}.


Every finite thick building $\Delta$ of {\em type B} and dimension $d \geq 2$ is obtained from a {\emph{classical polar space} of rank $d+1$. (For an axiomatic description of polar spaces in general, see \cite{Tits1974}.)
All such $\Delta$ are order complexes of posets of totally isotropic subspaces
associated to forms of Witt index $d+1$ on vector spaces over finite fields, partially ordered by inclusion.
The forms that yield thick finite and irreducible buildings of type $B$ and dimension $d \geq
2$ are
alternating bilinear forms on $\F_q^{2d+2}$,
Hermitian forms on $\F_{q^2}^m$ for $m = 2d+2$ or $m = 2d+3$,
symmetric bilinear forms on $\F_q^{2d+3}$
and
non-degenerate quadratic forms on $\F_q^{2d+4}$
(see \cite[Chapter 9.3]{AbramenkoBrown2008} for details).
In all cases, the corresponding totally isotropic subspaces of the respective vector space are those subspaces on which the respective form vanishes constantly.


\begin{proof}[Proof of Theorem~\ref{t:buildings} (i) and (ii)]
Recall that all one-dimensional buildings, called generalized $m$-gons, are
bipartite graphs and thus order complexes of posets of rank $2$. It is not hard
to show that in fact all thick generalized $m$-gons are non-planar graphs:
If $m \geq 3$ then a thick generalized $m$-gon has girth at least six and minimal degree at least three. But by Euler's formula, every planar graph of girth at least six has at least one vertex of degree at most two.
If $m=2$ it is not hard to see that generalized $2$-gons are exactly complete
bipartite graphs $K_{p,q}$ with $p, q \geq 2$ (this also follows from the fact
that generalized $2$-gons, as buildings, are reducible since their Coxeter
diagram is not connected). Since we consider thick buildings, we have $p,q \geq
3$ and it is well known that any graph containing $K_{3,3}$ is non-planar.

Part (ii) of Theorem~\ref{t:buildings} was already implicitly proved since
every finite building of type A is obtained as the order complex of the
lattice of subspaces of a finite vector space and the embeddability of those
has been treated in Section \ref{s:projspace}.
%
%
\end{proof}


\begin{remark}
Theorem \ref{t:buildings} implies non-embeddability also for many non-irreducible buildings:
Suppose that $\Delta$ is a building which is not irreducible.
We may assume
that $\Delta = \Delta_1 * \Delta_2$ where $\Delta_1$ and $\Delta_2$ are already
irreducible (otherwise we proceed by induction).
If our results apply to $\Delta_1$ and $\Delta_2$ because they are of right type, then can exhibit a map
$f_i \colon |K_i| \rightarrow |\Delta_i|$ which satisfies condition \eqref{eq:cond} where
$K_i = D_3^{*(\dim \Delta_i + 1)}$.
We use Lemma~\ref{l:joins} and find
that there is a map $f\colon |K| \rightarrow |\Delta|$, again satisfying Condition \eqref{eq:cond},
where $K = K_1 * K_2 = D_3^{*(\dim \Delta_1 + \dim \Delta_2 + 2)}$. Using
Proposition~\ref{p:map} we see that $\Delta$ does not embed into $\R^{2\dim \Delta}$,
note that $\dim \Delta = \dim \Delta_1 + \dim \Delta_2 + 1$.
\end{remark}

It remains for us to apply our methods to two classes of finite buildings of type B. Recall that those are obtained as order complexes of the poset of totally isotropic subspaces of some finite polar space. We treat the cases where the form defining those subspaces is either an alternating bilinear form or a Hermitian form.

\subsection{Buildings of type B coming from alternating bilinear forms}
\label{sec:typeBalternating}

If $\Delta$ is a finite thick $d$-dimensional building of type B coming from an alternating bilinear form, then $\Delta$ is the reduced order complex of the poset of totally isotropic subspaces corresponding to
that form $\bil{\cdot, \cdot}$ on a finite vector space $\F_q^{2d+2}$.

We can find basis vectors
$
e_1,\ldots,e_{d+1},f_1,\ldots,f_{d+1}
$
such that $\bil{\cdot, \cdot}$ is defined by
\begin{equation}
\label{eq:symplform}
  \bil{e_i,f_i} = - \bil{f_i,e_i} = 1
\end{equation}
for $i=1,\ldots,d+1$ and such that all other ``inner products'' between basis vectors are zero
\cite[Remark 6.99]{AbramenkoBrown2008}.

Let $P$ be the poset of all totally isotropic subspaces of $\F_q^{2d+2}$, that means those subspaces on which the bilinear form $\bil{\cdot,\cdot}$ vanishes constantly, ordered by inclusion.
The maximal dimension of such a subspace in this setting is known to be $d+1$, so $P$ has rank $d+1$.
Also, there is no unique maximal subspace.


As before, we construct an independent vector configuration in $\F_q^{2d+2}$ - corresponding to an independent atom configuration in $P$.

\begin{proposition}
\label{p:sympl}
Let $e_1,e_2,\ldots,e_{d+1},f_1,\ldots,f_{d+1}$ be a basis of $\F_q^{2d+2}$ and let $\bil{\cdot,\cdot}$ be the bilinear form on $\F_q^{2d+2}$ defined as in Equation \eqref{eq:symplform}. Then the vector configuration
\[
    x_{i,1} = e_i, \quad x_{i,2} = e_i + f_i, \quad x_{i,3} = f_i \quad i \in [d+1]
\]
is independent (with respect to $P$).
\end{proposition}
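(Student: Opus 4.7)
The plan is to verify the two conditions that make the given family into an independent atom configuration: first that it is extendable (i.e.\ every ``mixed'' join lies in $P$ and is neither $\hat 0$ nor $\hat 1$), and second that property (P2), i.e.\ condition \eqref{equ:indconfvector}, is satisfied. Both parts will exploit the block structure in which the pair $\{e_i,f_i\}$ forms a hyperbolic plane and distinct pairs are pairwise orthogonal with respect to $\bil{\cdot,\cdot}$.

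For extendability, the first observation is that for $i\neq k$ all four pairings $\bil{e_i,e_k}$, $\bil{e_i,f_k}$, $\bil{f_i,e_k}$, $\bil{f_i,f_k}$ vanish, so $\bil{x_{i,j},x_{k,\ell}}=0$ for all $j,\ell\in[3]$. Next I would check that $\bil{x_{i,j},x_{i,j}}=0$ for every $j$: this is immediate for $j=1$ and $j=3$, while for $j=2$ it follows from $\bil{e_i+f_i,e_i+f_i}=\bil{e_i,f_i}+\bil{f_i,e_i}=1-1=0$. Hence for any choice $j_1,\dots,j_{d+1}\in[3]$ every pairing among $x_{1,j_1},\dots,x_{d+1,j_{d+1}}$ is zero, so their span is a totally isotropic subspace and therefore belongs to $P$. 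Since $P$ contains no $\hat 1$ (as noted in the paragraph preceding the proposition) and since the span of nonzero vectors is not $\hat 0$, the map is extendable.

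For independence, I would apply criterion (P2). Fix $i\in[d+1]$ and $j,j_1,\dots,j_{d+1}\in[3]$ with $j\neq j_i$, and consider the coordinate projection
\[
    \pi_i\colon \F_q^{2d+2}\longrightarrow \langle e_i,f_i\rangle
\]
given by zeroing out all $e_k,f_k$ for $k\neq i$. Then $\pi_i(x_{k,j_k})=0$ for $k\neq i$ while $\pi_i(x_{i,j_i})=x_{i,j_i}$, so $\pi_i$ sends $\langle x_{k,j_k}:k\in[d+1]\rangle$ into the line $\langle x_{i,j_i}\rangle$. On the other hand $\pi_i(x_{i,j})=x_{i,j}$, and the three vectors $e_i,\,e_i+f_i,\,f_i$ represent three distinct points of the projective line $\mathbb{P}\langle e_i,f_i\rangle$, so $x_{i,j}$ and $x_{i,j_i}$ are not proportional when $j\neq j_i$. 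Therefore $x_{i,j}\notin\langle x_{k,j_k}:k\in[d+1]\rangle$, which is exactly \eqref{equ:indconfvector}.

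There is essentially no hard step here; the entire argument is book-keeping once the hyperbolic block structure and the projection $\pi_i$ are isolated. The only mild subtlety is noticing that extendability is automatic in the $\hat 1$-direction because $P$ has no maximum element, so one only needs to check total isotropy of the spans, which is decided by pairwise pairings of basis-aligned vectors.
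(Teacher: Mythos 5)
Your proof is correct, and the two halves split cleanly against the paper. The extendability part is essentially identical to the paper's: both of you observe that the hyperbolic blocks are pairwise orthogonal and that each $x_{i,j}$ is isotropic, so any mixed span is totally isotropic and hence a (non-maximal, non-minimal) element of $P$.

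For the independence part, however, you take a genuinely different route. The paper exploits the isotropy that was just established: if $x_{i,j}$ lay in the totally isotropic span $\langle x_{1,j_1},\dots,x_{d+1,j_{d+1}}\rangle$, then in particular $\bil{x_{i,j},x_{i,j_i}}$ would vanish, and a direct check on the three vectors $e_i$, $e_i+f_i$, $f_i$ shows this pairing is nonzero whenever $j\neq j_i$. You instead ignore the form entirely at this stage and argue purely linear-algebraically via the block projection $\pi_i\colon \F_q^{2d+2}\to\langle e_i,f_i\rangle$, reducing to the observation that $e_i$, $e_i+f_i$, $f_i$ are three pairwise non-proportional vectors of a two-dimensional space. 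This is the same style of argument the paper uses in Proposition~\ref{p:conftypeA} for projective spaces. The paper's version is a bit slicker in that it recycles the isotropy one line after establishing it; your version is a bit more robust in that it does not depend on the form at all once total isotropy is known, which is worth noting because the identical projection argument also proves Proposition~\ref{p:unitary} (the Hermitian case), where the paper just says ``the same arguments as in the proof of Proposition~\ref{p:sympl}.'' Both are complete proofs.
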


\begin{proof}
First, we need to show that the vector configuration is extendable or equivalently, that
$$
   \langle x_{1,j_1},\ldots,x_{d+1,j_{d+1}} \rangle
$$
is a totally isotropic subspace for any choice of $j_1,\ldots,j_{d+1} \in [3]$. But this is true because
$$
    \bil{ x_{i,j}, x_{i',j'} } = 0
$$ 
for any choice of $i,i' \in [d+1]$ and $j,j' \in [3]$ with $i \neq i'$ and we also check that
$
    \bil{ x_{i,j}, x_{i,j} } = 0
$
for any $i \in [d+1]$ and $j \in [3]$.

Let now $i \in [d+1]$ and $j,j_1,\ldots,j_{d+1} \in [3]$ with $j \neq j_i$ and assume that
$$
    x_{i,j} \in \langle x_{1,j_1}, \ldots, x_{d+1,j_{d+1}} \rangle.
$$
Then in particular 
$\bil{ x_{i,j}, x_{i,j_i} } = 0$ 
needs to be satisfied. However, we see that this can only be true if $j = j_i$, a contradiction.
\end{proof}

Proposition \ref{p:sympl} implies part (iii) of Theorem \ref{t:buildings}.

\subsection{Buildings of type B coming from Hermitian forms}
\label{sec:typeBhermitian}

If $\Delta$ is a finite thick $d$-dimensional building of type B coming from a Hermitian form, then it is the reduced order complex of the poset of totally isotropic subspaces corresponding to that form $\bil{\cdot, \cdot}$ on $\F_{q^2}^{m}$ for $m = 2d+2$ or $m = 2d+3$.

Being Hermitian means that $\bil{\cdot,\cdot}$ is linear in the first argument and satisfies
\[
    \bil{ w, v }
        = \overline{\bil{ v, w }} 
\]
for all $v,w \in \F_{q^2}^m$. So it is conjugate-linear in the second argument.

Then there are basis vectors $e_1,\ldots,e_n,f_1,\ldots,f_n$ for $m$ even or
basis vectors $e_1,\ldots,e_n,e_{n+1}$, $f_1,\ldots,f_n$ for $m$ odd such that $\bil{\cdot,\cdot}$ is given by
\begin{equation}
 \label{eq:hermform}
 \bil{e_i,f_i} = \bil{f_i,e_i} = 1
\end{equation}
and all other ``inner products'' of basis vectors are zero \cite[Remark 6.104]{AbramenkoBrown2008}.
(If $m$ is odd, we also have $\bil{e_{n+1},e_{n+1}} = 1$.)

Again, let $P$ be the poset
of totally isotropic subspaces of $\F_{q^2}^m$ with respect to $\bil{\cdot, \cdot}$,
that is those on which the Hermitian form vanishes constantly. Also in this case, the maximal dimension of a totally isotropic subspace is known to be $d+1$
and there is no unique maximal such.


We need to go through some details about $\F_{q^2}$:
The finite field $\F_{q^2}$ has a conjugation, that means it is a
degree two extension of another finite field. We think of $\F_{q^2}$ as
$\F_q[x]/(x^2+k)$ where $k \in \F_q$ is any non-square element. Then a
conjugation of $\F_{q^2}$ is given by the map that sends elements of $\F_q$ to
themselves and $x$ to $-x$. We find that the conjugation sends the element $\alpha + \beta x$ to $\alpha - \beta x$ for any $\alpha, \beta \in \F_q$. In particular,  we see that
it sends $\beta x$ to $- \beta x$ for any $\beta \in \F_q$. We write $\overline{\lambda}$ for the conjugation of $\lambda \in F_{q^2}$.

\begin{lemma}
\label{l:conjmin}
In a finite field $\F_{q^2}$ with a conjugation, there exists some $\lambda \neq 0$ such that $\overline{\lambda} = -\lambda$.
\end{lemma}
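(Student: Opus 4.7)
The plan is to extract $\lambda$ directly from the explicit presentation of $\F_{q^2}$ recorded in the paragraph just before the lemma. There, $\F_{q^2}$ is identified with $\F_q[x]/(x^2+k)$ for some non-square $k \in \F_q$, and the conjugation is described as the unique $\F_q$-linear involution sending $x$ to $-x$, so that $\overline{\beta x} = -\beta x$ for every $\beta \in \F_q$. Specializing to $\beta = 1$, I would simply take $\lambda$ to be the image of $x$ in this quotient. Then $\lambda \neq 0$ because $x$ is not a multiple of $x^2 + k$, and $\overline{\lambda} = -\lambda$ by the formula already stated.

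The only subtlety is that this particular presentation implicitly assumes $q$ is odd, since in characteristic $2$ every element of a finite field is a square and $x^2 + k$ is reducible. In that case, however, the conclusion is even easier: one has $-\mu = \mu$ for every $\mu \in \F_{q^2}$, so any nonzero element of the fixed field $\F_q$ (for instance $\lambda = 1$) trivially satisfies $\overline{\lambda} = \lambda = -\lambda$. A uniform way to phrase both cases, if one prefers to avoid dividing into characteristics, is to note that the conjugation is an $\F_q$-linear involution of the two-dimensional $\F_q$-space $\F_{q^2}$, hence decomposes $\F_{q^2}$ into its $\pm 1$ eigenspaces; the $-1$-eigenspace is nonzero except possibly in characteristic $2$, where the $+1$-eigenspace does the job.

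There is essentially no obstacle: the lemma is a one-line verification that packages a piece of information the authors will need when constructing totally isotropic vectors of the form $e_i + \lambda f_i$ for the Hermitian configuration in the next subsection.
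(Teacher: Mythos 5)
Your first paragraph is essentially the paper's proof verbatim: the paper simply says ``Choose $\lambda \in \F_{q^2}$ which corresponds to $x$ in $\F_q[x]/(x^2+k)$.'' So for odd $q$ you and the authors are doing the same thing.

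Your second paragraph, however, flags a real gap in the paper that the authors overlook. The presentation $\F_{q^2} \cong \F_q[x]/(x^2+k)$ with $k$ a non-square, together with the description of the conjugation as ``$x \mapsto -x$,'' only makes sense when $q$ is odd: in characteristic $2$ every element of $\F_q$ is a square (the Frobenius is surjective), $x^2+k$ is reducible, and the stated conjugation would be the identity rather than the actual nontrivial involution $\lambda \mapsto \lambda^q$. Since Hermitian forms and the associated type-$B$ buildings do exist over $\F_{q^2}$ for $q$ even, this case is not vacuous. Your fix is correct and cheap: in characteristic $2$ one has $-\mu = \mu$, so $\lambda = 1$ satisfies $\overline{\lambda} = -\lambda$ trivially, and the downstream computation in Proposition~\ref{p:unitary} (which only uses $\overline{\lambda} + \lambda = 0$ and $\lambda \neq 0$) goes through unchanged. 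The eigenspace reformulation you offer is a clean uniform way to state it. In short: same approach as the paper where the paper's proof applies, plus a necessary patch for even $q$ that the paper should have included.
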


\begin{proof}
Choose $\lambda \in \F_{q^2}$ which corresponds to $x$ in $\F_q[x]/(x^2+k)$.
\end{proof}

Also in this case, we can construct an independent vector configuration.

\begin{proposition}
\label{p:unitary}
Let $e_1,\ldots,e_{\lfloor m/2-1 \rfloor},f_1,\ldots,f_{d+1}$ be a basis of
$\F_{q^2}^m$ with $m = 2d+2$ or $m=2d+3$ such that the non-degenerate Hermitian form $\bil{\cdot,\cdot}$ is given by Equation \eqref{eq:hermform}. Let $\lambda \in F_{q^2}$ be such that $\overline{\lambda} = - \lambda$. Then the vector configuration
\[
    x_{i,1} = e_i, \quad x_{i,2} = e_i + \lambda f_i, \quad x_{i,3} = f_i, \quad i \in [d+1]
\]
is independent (with respect to the poset $P$).
\end{proposition}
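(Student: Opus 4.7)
The plan is to follow the exact same two-step strategy used in the proof of Proposition~\ref{p:sympl}: first verify that the proposed configuration is extendable by checking that each candidate join is totally isotropic, then detect the independence condition (P2) by exploiting the Hermitian form itself. Independence of the atom configuration with respect to $P$ then follows, and combined with Proposition~\ref{p:Xnonemb} this yields part~(iv) of Theorem~\ref{t:buildings}.

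For extendability, I need to show that $W := \langle x_{1,j_1}, \ldots, x_{d+1,j_{d+1}} \rangle$ is totally isotropic for every choice $j_1, \ldots, j_{d+1} \in [3]$. Since each $x_{i,j}$ is a combination of only $e_i$ and $f_i$, and any two distinct ``blocks'' $(e_i, f_i)$ and $(e_k, f_k)$ are mutually orthogonal under $\bil{\cdot,\cdot}$, I get $\bil{x_{i,j_i}, x_{k,j_k}} = 0$ automatically whenever $i \neq k$. The diagonal cases $j_i = 1$ and $j_i = 3$ give $\bil{e_i,e_i} = 0$ and $\bil{f_i,f_i} = 0$; the only non-trivial check is
\[
    \bil{e_i + \lambda f_i, e_i + \lambda f_i}
    = \overline{\lambda}\bil{e_i,f_i} + \lambda\bil{f_i,e_i}
    = \overline{\lambda} + \lambda = 0,
\]
where the last equality uses $\overline{\lambda} = -\lambda$. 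In the odd-dimensional case $m = 2d+3$ the extra basis vector $e_{n+1}$ never enters our configuration, so its anisotropy is irrelevant.

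For condition~(P2), suppose for contradiction that $x_{i,j} \in W$ for some $i \in [d+1]$ and some $j \neq j_i$. Because $W$ is totally isotropic and $x_{i,j_i} \in W$, this forces $\bil{x_{i,j}, x_{i,j_i}} = 0$. However, a direct computation of the six ordered pairs $(j,j_i)$ with $j \neq j_i$, using sesquilinearity $\bil{v, \mu w} = \overline{\mu}\bil{v,w}$, the relations $\bil{e_i,f_i} = \bil{f_i,e_i} = 1$, and $\overline{\lambda} = -\lambda$, produces an inner product in $\{\pm 1, \pm\lambda\}$, which is never zero since $\lambda \neq 0$. This contradiction establishes~(P2), hence independence by observation~(P2) in Section~\ref{s:projspace}.

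The only real obstacle is keeping the sesquilinearity bookkeeping straight: one must remember that the Hermitian form is conjugate-linear in the \emph{second} argument, so the computation $\bil{e_i, e_i + \lambda f_i} = \overline{\lambda} = -\lambda$ carries a conjugate while $\bil{e_i + \lambda f_i, e_i} = \lambda$ does not, and both of these need to be nonzero (which they are, precisely because $\lambda \neq 0$). Once this case analysis is carried out, the argument mirrors the alternating case word-for-word.
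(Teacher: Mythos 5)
Your proposal is correct and matches the paper's approach essentially word-for-word: the paper likewise checks extendability by computing the pairwise inner products, and then for independence refers back to the argument of Proposition~\ref{p:sympl}, which is exactly the observation that $x_{i,j} \in W$ would force $\bil{x_{i,j}, x_{i,j_i}} = 0$, contradicted by the explicit nonzero values you list. Your note about being careful with conjugate-linearity in the second slot and the irrelevance of the extra anisotropic basis vector in the odd case are both accurate and spelled out slightly more explicitly than in the paper.
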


\begin{proof}
As for the symplectic case, we need to show that the vector configuration is extendable, meaning that the Hermitian form vanishes on each subspace of the form
$$
    \langle x_{1,j_1},\ldots,x_{d+1,j_{d+1}} \rangle
$$
for $j_1,\ldots,j_{d+1} \in [3]$. This is true because 
$\bil{ x_{i,j}, x_{i',j'}} = 0$
for $i \neq i'$ and also 
$\bil{ x_{i,j}, x_{i,j} } = 0$
for each $i \in [d+1]$ and $j \in [3]$.

Now, we claim that the vector configuration is independent. However, this can be shown using the same arguments as in the proof of Proposition \ref{p:sympl}.
\end{proof}

Proposition \ref{p:unitary} implies part (iv) of Theorem \ref{t:buildings}.

\section{Outlook}
\label{s:outlook}

We think that the complex structure of finite thick buildings justifies the following conjecture. Our results confirm the conjecture for several large classes of buildings.

\begin{conjecture}
\label{c:main}
No $d$-dimensional finite thick building embeds into $\R^{2d}$.
\end{conjecture}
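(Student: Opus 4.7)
The plan is to extend the weakly independent atom configuration technique of Section~\ref{s:vecconf} to the remaining classes of finite thick buildings. What is missing from Theorem~\ref{t:buildings} is: type B coming from symmetric bilinear or non-degenerate quadratic forms (the \emph{orthogonal} polar spaces), type D (oriflamme complexes of even-dimensional hyperbolic orthogonal spaces), and the exceptional types $E_6$, $E_7$, $E_8$, $F_4$. With Proposition~\ref{p:Xnonemb} in hand, every case reduces to producing a weakly independent atom configuration of $3(d+1)$ atoms in the appropriate atomistic poset.

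For the orthogonal cases the obstacle is that in characteristic $\neq 2$ the only isotropic vectors inside a hyperbolic plane $\langle e_i,f_i\rangle$ are multiples of $e_i$ and $f_i$, so the construction of Propositions~\ref{p:sympl} and~\ref{p:unitary} does not apply verbatim. Instead, I would build each triple $\{x_{i,1},x_{i,2},x_{i,3}\}$ on a totally isotropic line that mixes two different hyperbolic pairs, using that thickness of the building forces every projective line in the polar space to contain at least three isotropic points. With the triples chosen in a ``shifted'' pattern across the hyperbolic pairs and using the anisotropic part (if any) to break ties for one distinguished index, one then checks extendability---every choice of one atom per triple spans a totally isotropic subspace of dimension $\leq d+1$---and the weak independence criterion of Lemma~\ref{l:char_wimap}, the latter amounting to a linear-independence check among the chosen isotropic vectors. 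The type D case should then follow from the type B orthogonal case in even dimension, since the underlying poset of proper totally isotropic subspaces is the same and only the top stratum is split into two families.

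For the exceptional types one loses the explicit coordinate realisation, so I would attempt induction on the rank, exploiting that every residue of a vertex of a thick spherical building of type $X_n$ is itself a thick spherical building (possibly reducible) whose Coxeter diagram is obtained from that of $X_n$ by deleting one node. Assuming the conjecture for all lower ranks, each residue admits via Proposition~\ref{p:map} a map from $D_3^{*(\dim)}$ satisfying condition~\eqref{eq:cond} (reducible residues being handled by Lemma~\ref{l:joins}), and one would then glue $d+1$ such maps coming from a carefully chosen simplex of vertices into a single map $|D_3^{*(d+1)}|\to|\Delta|$ still satisfying~\eqref{eq:cond}, concluding by Proposition~\ref{p:map}.

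I expect the principal obstacle to be exactly this gluing step: condition~\eqref{eq:cond} demands disjointness of images of disjoint simplices \emph{across} residues, which is not guaranteed when the component maps are produced independently. Making it work presumably requires picking a frame of $d+1$ pairwise opposite vertices whose residues interact in a controlled way, invoking the Moufang property and strong transitivity of spherical buildings of rank $\geq 3$, and then verifying that the collective join-lattice in the ambient building still respects weak independence. A less elegant but more concrete fallback is to treat each exceptional type individually through its Chevalley-group realisation, producing a weakly independent atom configuration by direct computation in the associated incidence geometry, at the cost of a uniform argument.
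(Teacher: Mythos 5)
The statement you are addressing is Conjecture~\ref{c:main}, which the paper explicitly leaves \emph{open}: the authors prove only the cases listed in Theorem~\ref{t:buildings}, and in Section~\ref{s:outlook} they point out precisely why the remaining cases resist their method (type $D$ buildings are not order complexes of posets; type $B$ from quadratic forms is unresolved). So your proposal cannot be compared to a paper proof --- it is a research sketch for an open problem, and it should be evaluated on whether it closes the gaps the authors identify. It does not.

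The most concrete error is in the type $D$ paragraph. You assert that ``the underlying poset of proper totally isotropic subspaces is the same and only the top stratum is split into two families,'' and conclude that type $D$ ``should follow'' from type $B$ orthogonal. This is not so. The type $D_n$ building is the oriflamme complex: its vertex set consists of t.i.\ subspaces of dimensions $1,\dots,n-2$ together with \emph{two} families of maximal t.i.\ subspaces, with an incidence relation on maximals that is not inclusion (two maximals are incident iff they meet in codimension one). In particular, the $(n-1)$-dimensional t.i.\ subspaces are \emph{not} vertices, and the complex is not the order complex of any poset --- exactly what the paper warns of in Section~\ref{s:outlook}. By contrast, the order complex of the poset of all proper t.i.\ subspaces for a hyperbolic quadratic form on $\F_q^{2n}$ is a \emph{non-thick} complex (each $(n-1)$-dimensional subspace lies in exactly two maximal ones), so nothing proved for thick type $B$ order complexes transfers. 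Proposition~\ref{p:Xnonemb}, the engine of the whole method, is a statement about $\Delta(\bar P)$ and simply does not apply to the oriflamme complex; a genuinely new construction of a map from $D_3^{*(d+1)}$ into the building, not mediated by a poset, would be needed.

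The orthogonal polar space paragraph correctly diagnoses the characteristic $\ne 2$ obstacle, but the proposed fix --- triples supported on t.i.\ lines ``mixing'' hyperbolic pairs, shifted across indices --- is left entirely unverified on both counts that matter: extendability (every transversal must span a t.i.\ subspace, and mixing pairs risks producing anisotropic inner products across distinct triples) and the equality criterion of Lemma~\ref{l:char_wimap}. This is precisely the kind of delicate bookkeeping that Propositions~\ref{p:sympl} and~\ref{p:unitary} carry out explicitly; without it there is no argument, only a direction. Finally, for the exceptional types your own text concedes that the gluing of residue maps ``is not guaranteed'' to satisfy condition~\eqref{eq:cond}; since that is the entirety of the argument, the exceptional cases remain untouched. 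In summary the proposal identifies the right toolbox (Proposition~\ref{p:Xnonemb}, Lemma~\ref{l:char_wimap}, Lemma~\ref{l:joins}) but does not supply the new ingredient that the paper itself says is missing.
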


It would be very desirable to prove Conjecture~\ref{c:main} using an argument which works for all finite thick buildings. As for example buildings of type D are not order complexes of any posets, our method won't work for the general case. However, we know that finite thick buildings of type D are closely related to the also still unsolved case of finite thick buildings of type B which come from a quadratic form.

Another possible approach to prove Conjecture~\ref{c:main} is to use root group techniques as suggested by Bernd M\"{u}hlherr. It seems reasonable to believe that all root groups corresponding to finite thick buildings contain a subrootgroup corresponding to the product of rank one type $A$ root groups in such a way that the buildings all contain the $(d+1)$-fold join of disjoint points or some subdivision of it as a subcomplex.


\section*{Acknowledgements}

We would like to thank Anders Bj\"{o}rner, Marek Kr\v{c}\'al, Bernd M\"{u}hlherr, Joseph A. Thas and Uli Wagner for helpful discussions
about this project.

\bibliographystyle{alpha}
\bibliography{References}

\end{document}